    %
    %

\documentclass[a4paper, 11pt,  reqno]{amsart} 
\usepackage[left=3cm,right=3cm,top=3cm,bottom=3cm]{geometry}
\usepackage[babel]{microtype} 
\linespread{1.03}
\usepackage[dvipsnames]{xcolor} 
\usepackage[english]{babel}               

%

\usepackage[T1]{fontenc}    

\usepackage{lmodern}





\usepackage[utf8]{inputenc} 
 

 \usepackage{amsmath}      
\usepackage{amssymb}      
\usepackage{amsthm}
\theoremstyle{plain}
\newtheorem{theorem}{Theorem}[section]

\newtheorem*{proposition*}{Proposition}
\newtheorem*{theorem*}{Theorem}
\newtheorem{lemma}[theorem]{Lemma}
\newtheorem*{lemma*}{Lemma}

\theoremstyle{definition}

\newtheorem{remark}[theorem]{Remark}

\newtheorem{conjecture}{Conjecture}
\usepackage{mathrsfs}      
\usepackage{tcolorbox}
\usepackage{tikz} 
\usepackage{enumerate}    
\usepackage{enumitem}     
\usepackage{mathtools} 
\usepackage{textcomp}  
\mathtoolsset{showonlyrefs} 
\numberwithin{equation}{section}  

\usepackage[symbol]{footmisc}
 \usepackage{comment}      
 \usepackage{appendix}

\usepackage{graphicx}  
\usepackage{float}                      


\usepackage[
breaklinks   = true,        
pagebackref  = true,     
]{hyperref}

\hypersetup{
	colorlinks=true,
	pdfpagemode=UseNone,
    citecolor=Green,
    linkcolor=OrangeRed,
    urlcolor=black,
	pdfstartview=FitW
}



\def\N{\mathbb{N}}

\def\R{\mathbb{R}}
\def\E{\mathbf{E}}
\def\P{\mathbf{P}}

\def\L{\mathcal{L}}
\newcommand{\ind}[1]{\mathbf{1}_{\left\{ #1 \right\}}}    

\renewcommand{\bar}[1]{\mkern 1.5mu\overline{\mkern-1.5mu#1\mkern-1.5mu}\mkern 1.5mu}



\title[Left Tail of the Subcritical Derivative Martingale]{Left Tail of the Derivative Martingale in a Gaussian BRW
in the Entire Subcritical Regime}

\author{Xinxin Chen, Yichao Huang and Heng Ma}

\address[Xinxin Chen]
{Beijing Normal University, School of Mathematical Sciences, China}
\email{xinxin.chen@bnu.edu.cn}

\address[Yichao Huang]
{Beijing Institute of Technology, School of Mathematics and Statistics, China}
\email{yichao.huang@bit.edu.cn}

\address[Heng Ma]
 {School of Mathematics and Statistics, Beijing Institute of Technology, China \& Faculty of Data and Decision Sciences, Technion–Israel Institute of Technology, Israel.} 
\email{hengmamath@gmail.com}
\urladdr{\url{hengmamath.github.io}}


\begin{document}

\begin{abstract}
We establish a rather sharp two-sided estimate for the left tail probability of the derivative martingale limit in a binary Gaussian branching random walk throughout the entire subcritical regime, confirming a conjecture by Lacoin, Rhodes, and Vargas (\emph{Duke Math. J.} 171(3):483--545, 2022) in the case of Gaussian multiplicative cascades.
\end{abstract}

\maketitle


\section{Introduction}
Consider a branching random walk (BRW) with binary splitting and independent Gaussian increments. It is a Gaussian  process $(S_{u}:u \in \mathbb{T})$ indexed by the binary tree $\mathbb{T}$ rooted at $\rho$. For each vertex $u \in \mathbb{T}$, we denote by $|u|$ the generation of $u$, defined as the graph distance between the root vertex $\rho$ and $u$. We write $v \prec u$ if $v$ is an ancestor of $u$, and $v \preceq u$ if $v \prec u$ or $v=u$. 

Say the root vertex $\rho$ is located at $S_\rho\in\R$. Let $(\xi_v)_{v \in \mathbb{T}}$ be a family of  i.i.d. copies of a standard Gaussian random variable $\xi$. For any $u \in \mathbb{T}$, set $S_u \coloneqq \sum_{\rho \prec v \preceq u} \xi_v+S_\rho$. Usually ,we take $S_\rho=0$.  Then, by interpreting $S_u$ as the energy of the vertex $u$, we define the normalized partition function of the corresponding Gibbs measure on the $n$-th generation at inverse temperature $\beta \geq 0$ as
\begin{equation*}
  W_{n}(\beta)\coloneqq\frac{\sum_{|u|=n}e^{\beta S_u}}{\E\left[\sum_{|u|=n} e^{\beta S_u}\right]}=\frac{1}{2^{n}}\sum_{|u|=n} e^{\beta S_u-\frac{\beta^2}{2}n},\quad n \ge 0. 
\end{equation*}
Thanks to the branching structure, the sequence $(W_{n}(\beta))_{n \ge 0}$  forms a non-negative martingale, known as the \emph{additive martingale}, and thus converges almost surely to some non-negative limit $W_{\infty}(\beta)$. 
 It is well known from Biggins' martingale convergence theorem
 (see e.g., \cite{Biggins77} and \cite[Theorem 3.2]{Shi15}) that there exists a critical inverse temperature $\beta_{c} := \sqrt{2 \ln 2}$ such that the limit $W_{\infty}(\beta)$ is non-degenerate if and only if $\beta \in [0, \beta_c)$, and in this case, $W_\infty(\beta)>0$ a.s.   
By differentiating $W_{n}(\beta)$ with respect to $\beta$, we obtain the so-called \emph{derivative martingale} of the branching random walk:
\begin{equation}
  D_n(\beta) \coloneqq   \frac{\partial}{\partial \beta} W_{n}(\beta) = \frac{1}{2^{n}}\sum_{|u|=n} (S_u- \beta n  ) e^{\beta S_u - \frac{\beta^2}{2} n}  ,\quad n \ge 0. 
\end{equation} 
It can be verified that  for $\beta \in [0,\beta_{c})$,  $D_{n}(\beta)$ is a signed martingale and converges a.s. to a non-degenerate limit $D_{\infty}(\beta)$, see e.g. \cite[Proposition 3.2]{CdRM24}.

The right tail behavior of the limit $D_{\infty}(\beta)$ was previously studied in \cite{CdRM24}. Specifically, for fixed $\beta \in (0,\beta_{c})$ there exists some constant $C_{\beta}>0$ such that 
\begin{equation}\label{eq-tail-of-D}
  \P(  D_{\infty}(\beta)  > y) \sim C_{\beta} \Bigl( \frac{\ln y}{y}     \Bigr)^{\gamma } \text{ as } y \to \infty 
\end{equation} 
where the exponent $\gamma=\left({\beta_{c}}/{\beta}\right)^{2}$ is always greater than $1$ in the subcritical regime.

Our first theorem establishes left tail probability estimates for the limiting derivative martingale $D_{\infty}(\beta)$. This confirms~\cite[Conjecture 1]{LRV22}\footnote{For readers without access to the published version, the detailed statement is available in the HAL version.} for Gaussian multiplicative cascades (see also~\cite{BV23}). 
\begin{theorem}\label{thm-Z-tail}
For any $\beta \in (0, \beta_{c})$, write $\gamma=\left({\beta_{c}}/{\beta}\right)^{2}>1$ as above. There exist constants $0<c<C<\infty$ depending only on $\beta$  such that for any $y \ge 1$, 
\begin{equation}\label{eq-Z-tail}
c\exp\left(-Cy^{\gamma}\right)\leq\P(D_{\infty}(\beta)<-y)\leq C\exp\left(-cy^{\gamma}\right). 
\end{equation}
\end{theorem}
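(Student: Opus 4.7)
The plan is to prove the upper and lower bounds of \eqref{eq-Z-tail} separately, starting from the branching decomposition at the root:
\[ Z_\infty \stackrel{d}{=} \tfrac{1}{2}\sum_{i=1,2} e^{\beta \xi_i - \beta^2/2}\bigl[Z_\infty^{(i)} + (\beta - \xi_i) W_\infty^{(i)}\bigr], \]
where the $\xi_i$ are independent standard Gaussians and $(Z_\infty^{(i)}, W_\infty^{(i)})$ are independent copies of $(Z_\infty, W_\infty)$, also independent of $\xi_1, \xi_2$. Iterating this identity $n$ times yields the representation
\[ Z_\infty = \sum_{|u| = n} 2^{-n} e^{\beta S_u - \beta^2 n/2} \bigl[Z_\infty^{(u)} + (\beta n - S_u) W_\infty^{(u)}\bigr], \]
with $\mathcal{F}_n$-measurable coefficients (where $\mathcal{F}_n$ denotes the natural filtration of the branching random walk up to generation $n$) and with independent subtree limits $(Z_\infty^{(u)}, W_\infty^{(u)})_{|u|=n}$. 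A further essential input is the classical polynomial tail $\P(W_\infty(\beta) > y) \asymp y^{-\gamma}$ of the subcritical additive martingale, which gives in particular $\E[W_\infty^{q}] < \infty$ if and only if $q < \gamma$.

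For the upper bound, I would estimate the positive-part moments $M_p := \E[(Z_\infty)_+^{p}]$ with target $M_p \le (C_\beta p)^{p/\gamma}$. Markov's inequality $\P(Z_\infty > y) \le M_p/y^{p}$, optimized over $p$ (with $p$ of order $y^{\gamma}$), then yields $\P(Z_\infty > y) \le C e^{-c y^{\gamma}}$. The moment bound itself I would extract from the $n$-fold iterated identity: conditionally on $\mathcal{F}_n$, the positive part of $Z_\infty$ decomposes as a sum of $2^n$ independent contributions with $\mathcal{F}_n$-measurable weights; a Rosenthal-type $L^p$ inequality for independent sums, combined with the moment bounds on $W_\infty$ available for $q < \gamma$, closes the recursion once $n$ is taken proportional to $p$.

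For the lower bound, I would construct an explicit event $G_y$ with $\P(G_y) \ge c e^{-C y^{\gamma}}$ on which $Z_\infty \ge y$ holds deterministically. Guided by the Gaussian large deviation rate, a natural candidate is the event that a prescribed family of $O(y^{\gamma})$ increments in the tree take favorable values of order $y^{\gamma/2}$ in magnitude, while the subtree limits $W_\infty^{(u)}$ attached to them take typical values close to $\E W_\infty = 1$. Gaussian densities then produce a cost of the correct order $e^{-C y^{\gamma}}$, and the iterated identity together with the positivity of typical $W_\infty^{(u)}$ forces $Z_\infty \ge y$ on $G_y$.

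The main obstacle for the upper bound is that a one-step $L^p$-recursion fails for $p > 1$, since the per-step amplification factor $\|e^{\beta \xi - \beta^2/2}\|_p = e^{(p-1)\beta^2/2}$ exceeds unity. Only the $n$-fold iterated recursion, in which the multiplicity $2^n$ counteracts the repeated amplification, can close the estimate; the critical exponent $\gamma = 2 \ln 2 / \beta^2$ appears precisely as the threshold beyond which this balance breaks, coinciding with the polynomial blow-up of $\E[W_\infty^{\gamma}]$. For the lower bound, the delicate step is to pinpoint a concrete favorable event realizing the correct exponential cost $e^{-Cy^{\gamma}}$ and to verify, via a second-moment or conditional argument, that the random subtree contributions do not spoil the pointwise inequality $Z_\infty \ge y$ with non-negligible probability.
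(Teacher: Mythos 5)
Your high-level plan for the upper bound (prove $\E[(Z_\infty)_+^p]\le (C p)^{p/\gamma}$, then optimize Markov's inequality in $p$) matches the paper's, but the mechanism you propose for the moment bound has a genuine gap. In the $n$-fold decomposition the conditional summands are $2^{-n}e^{\beta S_u-\beta^2 n/2}\bigl[Z_\infty^{(u)}+(\beta n-S_u)W_\infty^{(u)}\bigr]$, and each of these has \emph{infinite} $p$-th absolute moment for every $p\ge\gamma$: $W_\infty$ has a power-law right tail of exponent $\gamma$, and $Z_\infty^{(u)}$ has a power-law left tail of exponent $\gamma$ (this is the tail of $D_\infty$ in \eqref{eq-tail-of-D}). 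A Rosenthal-type inequality is therefore inapplicable — its right-hand side is already infinite — and passing to positive parts does not rescue it, since $\bigl[Z_\infty^{(u)}+(\beta n-S_u)W_\infty^{(u)}\bigr]_+=e^{-\beta(S_u-\beta n)}\bigl(Z_\infty^{[S_u-\beta n],(u)}\bigr)_+$ is exactly a shifted copy of the unknown quantity, so the argument is circular. The missing input is the a priori bound $\sup_{x}\E\bigl[(Z^{[x]}_\infty(\beta))_+^{k}\bigr]<\infty$ for every $k$, uniformly in the shift; this is the paper's Lemma~\ref{lem-polynomial-decay}, the technical core of the proof, and it is obtained not from moments of $W_\infty$ but from the level-set large deviation estimates of Lemmas~\ref{lem-LDP-level-set-below-the-line} and~\ref{lem-LDP-level-set-above-the-line} (decay $e^{-\Theta(n^2)}$, via the target/forbidden region comparison in Lemmas~\ref{lem-Polynomial-decay-low-initial-position} and~\ref{lem-Polynomial-decay-high-initial-position}). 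Incidentally, once that input is available the paper closes the $k^{k/\gamma}$ growth with a \emph{one-step} recursion $(Z^{[x]}_\infty)_+\le q\sum_{|u|=1}(Z^{[x+S_u-\beta],(u)}_\infty)_+$ with $q=e^{\beta^2/2}/2<1$, normalized by $k!\,e^{(1-1/\gamma)k\ln k}$ and an induction on $k$; your assertion that only an $n$-fold iteration with $n\propto p$ can balance the amplification is not how the exponent $\gamma$ actually enters.

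Your lower-bound event is also miscalibrated. Forcing $O(y^{\gamma})$ increments to take values of magnitude $y^{\gamma/2}$ would cost $\exp(-\Theta(y^{2\gamma}))$, far below the target, and such displacements are unnecessary: since $Z_n(\beta)\le(\beta e)^{-1}e^{\beta^2 n/2}$, the correct strategy is to take $n$ with $e^{\beta^2 n/2}\approx y$ (so $n=\Theta(\ln y)$ and the tree up to generation $n$ has $\approx y^{\gamma}$ vertices) and pin \emph{every} increment into a window of width $\Theta(2^{-k})$ around the constant $\beta$ at depth $n-k$, so that all particles at generation $n$ sit at $\beta n-\Theta(1)$; each vertex then costs only $\Theta(2^{-k})$ in probability and the total cost is $e^{-\Theta(2^n)}=e^{-\Theta(y^{\gamma})}$. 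Moreover, controlling $W_\infty^{(u)}$ alone does not force $Z_\infty\ge y$: the subtree limits $Z_\infty^{(u)}$ have heavy negative tails, and the paper handles them by additionally imposing $Z_\infty^{(u)}\ge 0$ together with $W_\infty^{(u)}\ge 1$ for all $|u|=n$, an event of conditional probability $p_*(\beta)^{2^{n}}$, which is affordable at this scale; a second-moment argument is not needed.
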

 
Our second result establishes that the ratio of the derivative martingale limit with respect to the additive  martingale limit ${D_{\infty}(\beta)}/{W_{\infty}(\beta)}$ has finite negative exponential moments of all orders. This extends \cite[Theorem 3.2(ii)]{LRV22} to the entire subcritical regime $\beta<\beta_{c}$ for log-correlated Gaussian fields on trees,   whereas in \cite{LRV22} the corresponding statement was established for the derivative Gaussian multiplicative chaos (DGMC) only for $\beta\in(0,\frac{\beta_c}{2})$. 

\begin{theorem}\label{thm-Z/W-tail}
For any $\beta\in(0, \beta_{c})$, there exist $0<c<\infty$ depending only on $\beta$ such that for any $y \ge 1$,
\[  \P  \Bigl(  \frac{ D_{\infty}(\beta) }{W_{\infty}(\beta)} <- y   \Bigr) \leq   \exp \bigl( - c y^{\gamma \wedge 2}    \bigr).   \]  
 In particular,  the ratio $\frac{D_{\infty}(\beta)}{W_{\infty}(\beta)}$ admits all negative exponential moments:
\[
  \E\left[\exp\left\{-K\frac{D_{\infty}(\beta)}{W_{\infty}(\beta)}\right\}\right]<\infty,
  \qquad \forall K>0.
\]
\end{theorem}

\begin{remark}\label{rmk:enhanced-thm}
Our method yields a stronger, uniform version of the bound in Theorem~\ref{thm-Z-tail}. In particular, our result shows that there exist constants $C,c>0$ depending
only on $\beta$ such that 
\begin{equation}
  \label{eq:enhanced-thm}
  \sup_{x\in\mathbb{R}}\P(D_{\infty}^{[x]}(\beta)<-y)\leq C\exp\left(-cy^{\gamma}\right),\quad y\geq1.
\end{equation}
Here $D_{\infty}^{[x]}(\beta)$ denotes the derivative martingale limit of the BRW starting at $S_\rho=x \in \mathbb{R}$, see~\eqref{eq-Z-x-n-1} below. 
Furthermore, the identity $\frac{D_{\infty}^{[x]}(\beta)}{W_{\infty}^{[x]}(\beta)}=\frac{D_{\infty}(\beta)}{W_{\infty}(\beta)}+x$ implies that Theorem~\ref{thm-Z/W-tail} also holds for the branching random walk with arbitrary initial position, with the constant $c$ depending on the initial position.
\end{remark}

\subsection{Motivations and related works} 
The Gaussian branching random walk studied in this paper is, after exponentiation and martingale normalization, exactly Mandelbrot's lognormal multiplicative cascade, introduced in the study of energy dissipation in turbulent flows~\cite{Man72,Man74}. Kahane's theory of Gaussian multiplicative chaos (GMC)~\cite{Kahane85} later generalized this picture to the situation of a log-correlated Gaussian field on a continuous space. Both cascades and GMC share many basic features and exhibit the same phase transition in the inverse temperature parameter. GMC has recently gained significant attention for its role in fields such as Liouville conformal field theory, random matrix theory, and number theory. See~\cite{BP25,RV14} for more details. 
 
Based on the theory of GMC,  Lacoin, Rhodes, and Vargas~\cite{LRV22} rigorously defined the quantum Mabuchi-Liouville theory~\cite{BFK14,FKZ11,FKZ12}, by constructing non-perturbatively a finite measure on the space of Riemann metrics of a given compact Riemann surface. In physics, this corresponds to a path integral based on a Gibbs type measure with energy given by the sum of the Liouville action $\mathcal{S}_{\mathrm{L}}$ and the quantum Mabuchi K-energy $\mathcal{S}_{\mathrm{M}}$ with action $\bar{\beta} \mathcal{S}_{\mathrm{M}} + \mathcal{S}_{\mathrm{L}}$ (see~\cite[(1.5) and (1.10)]{LRV22}). The Liouville action leads to the GMC denoted as $\mathcal{G}_{\infty}(\bar{\gamma})$, while the potential  term  $( \bar{\gamma}\varphi )e^{ \bar{\gamma} \varphi}$ in the Mabuchi action corresponds to the derivative  of GMC, $\mathcal{D}_{\infty}(\bar{\gamma} ) = \frac{\partial}{\partial \bar{\gamma} } \mathcal{G}_{\infty}(\bar{\gamma})$.\footnote{Here $\bar{\beta}$ and $\bar{\gamma}$ are different from $\beta$ and $\gamma$ above, with $\bar{\beta}/\bar{\gamma}$ equal to some explicit positive constant.}

A crucial ingredient in the construction of the quantum Mabuchi theory is to show that these derivative GMC   random variables $\mathcal{D}_{\infty}(\bar{\gamma})$  possess all negative exponential moments, see~\cite[Theorem 3.4 and equation (3.9)]{LRV22}.  
The authors of~\cite{LRV22} established the sub-Gaussian tail probability bound
\begin{equation}\label{eq-LVR-est}
  \P( \mathcal{D}_{\infty}(\bar{\gamma}) \le -v  )\le 2  e^{- c v^2}
\end{equation} 
valid in the so-called $L^4$ regime, where the coupling parameter satisfies $\bar{\gamma} \in (0, \bar{\gamma}_{c}/2)$ with $\bar{\gamma}_{c}$ denoting the critical value. As emphasized in~\cite{LRV22}, the technical restrictions underlying estimate  \eqref{eq-LVR-est}, as well as the universality of derivative  GMC measures, prevent their statements from covering the whole   range of expected valid parameters $\bar{\gamma} \in (0, \bar{\gamma}_{c})$. More precisely, the core argument of~\cite{LRV22} is a concentration bound for exponential martingale, which requires the control of the quadratic variation of the derivative martingale. Since the latter is not always finite, a finely chosen cutoff is applied, and the quadratic variation method is again used to control the error term in the cutoff. The double quadratic variations lead ultimately to the $L^{4}$ constraint of this approach.

Motivated by this limitation, the authors conjectured that the sub-Gaussian bound can be sharpened to hold throughout the entire subcritical interval $\bar{\gamma} \in (0, \bar{\gamma}_{c})$:  
\begin{conjecture}[{\cite{LRV22}}] 
For any log-correlated Gaussian field defined on $D \subset \mathbb{R}^d$ admitting a smooth white noise decomposition, and for every $\bar{\gamma} \in (0, \bar{\gamma}_{c} )$,
\begin{equation}
-\ln \P(\mathcal{D}_\infty(\bar{\gamma}) < -v) \asymp v^{(\bar{\gamma}_{c}/\bar{\gamma})^{2}}. 
\end{equation} 
\end{conjecture}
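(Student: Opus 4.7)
The plan is to reduce the estimate for $\mathcal{D}_\infty(\bar{\gamma})$ in the continuum to the tree-level estimate of Theorem~\ref{thm-Z-tail}, by extracting a branching structure from the log-correlated field via the smooth white noise decomposition. First, I would use the scale decomposition $X = X^{(T)} + \tilde X^{(T)}$, where $X^{(T)}$ is the regularization of $X$ at scale $e^{-T}$ and $\tilde X^{(T)}$ is the independent fine-scale remainder. Tile $D$ by dyadic cubes $\{B_u\}_{|u|=T}$ of side $e^{-T}$ and set $S_u := X^{(T)}(\text{center of } B_u)$; up to uniformly bounded Gaussian perturbations, $(S_u)_{|u|\leq T}$ is distributed like a $b$-ary branching random walk with Gaussian increments, where $b$ and the increment variance are calibrated to match the GMC critical parameter $\bar{\gamma}_c$. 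Under this matching, the tree exponent $(\beta_c/\beta)^2$ of Theorem~\ref{thm-Z-tail} becomes exactly $(\bar{\gamma}_c/\bar{\gamma})^2$ (the extension of Theorem~\ref{thm-Z-tail} from binary to general offspring being routine).

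Next, decompose the derivative GMC according to the tiling:
\begin{equation}
-\mathcal{D}_\infty(\bar{\gamma}) \;=\; \sum_{|u|=T} e^{\bar{\gamma} S_u - \frac{\bar{\gamma}^2}{2}T}\Bigl[(\bar{\gamma} T - S_u)\,\mathcal{G}^u_\infty - \mathcal{D}^u_\infty\Bigr] \;+\; R_T,
\end{equation}
where $\mathcal{G}^u_\infty$ and $\mathcal{D}^u_\infty$ are the fine-scale GMC and derivative-GMC restricted to $B_u$, essentially i.i.d.\ across $u$ (after the canonical $e^{-dT}$ rescaling) and distributed as unit-domain copies of $(\mathcal{G}_\infty, \mathcal{D}_\infty)$; the remainder $R_T$ absorbs the $O(1)$ coarse-scale covariance discrepancy. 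The first sum is a BRW derivative martingale in the sense of the excerpt, dressed with independent bounded-moment multiplicative weights $\mathcal{G}^u_\infty$ and independent signed additive perturbations $\mathcal{D}^u_\infty$.

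For the upper bound $\P(-\mathcal{D}_\infty(\bar{\gamma}) > v) \leq C e^{-c v^\gamma}$, I would pick $T = T(v) \asymp \log v$ and proceed in two layers. Conditional on the coarse field $(S_u)$, Theorem~\ref{thm-Z-tail} (for the $b$-ary BRW) controls the backbone $\sum_u (\bar{\gamma} T - S_u) e^{\bar{\gamma} S_u - \bar{\gamma}^2 T/2}\, \E[\mathcal{G}^u_\infty]$; then Theorem~\ref{thm-Z/W-tail}, combined with known polynomial right-tail and exponential left-tail estimates for GMC, controls the multiplicative fluctuations $\mathcal{G}^u_\infty / \E[\mathcal{G}^u_\infty]$ and the additive corrections $\mathcal{D}^u_\infty$, showing that the total fine-scale perturbation has stretched-exponential tail with an exponent strictly better than $\gamma$. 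For the matching lower bound, I would apply a Cameron-Martin shift to $X^{(T)}$ of total energy $O(v^\gamma)$, producing with probability $\geq c e^{-C v^\gamma}$ a favorable coarse configuration; on that event, the lower-bound half of Theorem~\ref{thm-Z-tail} forces $-\mathcal{D}_\infty(\bar{\gamma}) \geq v$ with positive probability.

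The main obstacle is the signed nature of $\mathcal{D}_\infty(\bar{\gamma})$, which rules out any direct use of Kahane's convexity inequality and forces the tree comparison to be carried out at the level of the joint law of $(\mathcal{G}_\infty, \mathcal{D}_\infty)$. Preserving the exponent $\gamma = (\bar{\gamma}_c/\bar{\gamma})^2$ exactly — it varies continuously with $\bar{\gamma}$ and exceeds $1$ throughout the subcritical regime — demands that the $O(1)$ covariance discrepancy between $X$ and its BRW approximation cost at most a constant factor in probability on the scale $T \asymp \log v$, and that the i.i.d.\ fine-scale GMC fluctuations contribute only sub-leading corrections to the rate $v^\gamma$. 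This balance is delicate precisely because one must verify that the stretched-exponential exponent governing the fine-scale perturbation is strictly better than $\gamma$, uniformly over the possible coarse-scale configurations — and it is here that Theorem~\ref{thm-Z/W-tail}, giving all exponential moments of $\mathcal{D}^u_\infty/\mathcal{G}^u_\infty$, plays a decisive role.
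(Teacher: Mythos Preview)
The statement you are attempting to prove is presented in the paper as an \emph{open conjecture}, not as a theorem. The paper does not contain a proof of it. What the paper establishes is Theorem~\ref{thm-Z-tail}, which is the special case of this conjecture for log-correlated Gaussian fields on trees (the branching Wiener process). The authors write explicitly that the general case ``requires some further technical inputs and is still under investigation.'' So there is no ``paper's own proof'' to compare your proposal against.

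That said, your outline is a natural strategy for attacking the full conjecture, and it is worth flagging where the real work lies. The comparison between the field $X$ and a genuine BRW is not as clean as ``up to uniformly bounded Gaussian perturbations'': the values $X^{(T)}(\text{center of }B_u)$ do not have independent increments along the tree, and the residual correlations between distinct branches are of order one, not $o(1)$. For the (positive) GMC $\mathcal{G}_\infty$ this is handled by Kahane's convexity inequality, but as you note, that tool is unavailable for the signed $\mathcal{D}_\infty$. Your proposal to work instead with the joint law of $(\mathcal{G}_\infty,\mathcal{D}_\infty)$ and invoke Theorem~\ref{thm-Z/W-tail} is the right instinct, but the step where you assert that the fine-scale fluctuations have ``stretched-exponential tail with an exponent strictly better than $\gamma$'' is precisely the missing ingredient: you would need a quantitative decoupling estimate for the signed derivative GMC across dyadic cells, uniform over coarse-scale configurations, and no such estimate is supplied or cited. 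This is not a minor technicality --- it is the heart of why the conjecture remains open.
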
 
To tackle this conjecture, one can consider the same question for Gaussian multiplicative cascades. In this context, Bonnefont and Vargas established in~\cite{BV23} the corresponding upper bounds for the left tail probability of $D_{\infty}(\beta)$ in the $L^4$ regime  $\beta \in (0,\beta_{c}/2)$, based on the concentration method of~\cite{LRV22} as explained above. They also proved the conjectured lower bound in the whole $L^{1}$ regime $\beta \in (0,\beta_{c})$.

Our main result, Theorem~\ref{thm-Z-tail}, fills the gap for the upper bound when $\beta$ is in the regime $ [\beta_{c}/2, \beta_{c})$. This requires completely abandoning the quadratic variation and concentration method approach of~\cite{LRV22} and~\cite{BV23}. We rely on a general recursive scheme to control the moment growth of the negative part $(D_{\infty}(\beta))_-$, only with very soft (polynomial tail) a priori estimates given by the study of level sets of BRW to initiate the iteration. Our result and method, valid without restriction on $\beta\in(0,\beta_c)$, provides strong evidence towards the validity of the above conjecture for GMC and the probabilistic construction in~\cite{LRV22} in the entire subcritical regime, although this more general case requires some further technical inputs and is still under investigation.


\subsection{Proof ideas} 
For convenience, we consider $Z_n(\beta)=-D_n(\beta)$ and $Z_\infty(\beta)=-D_\infty(\beta)$ in the following, and study the decaying rate of the probability $\P(Z_\infty(\beta)>y)$.
We begin by recalling some heuristics behind Theorem~\ref{thm-Z-tail} given in~\cite{LRV22,BV23}. The strategy to make $Z_{\infty}(\beta)$ large is to force
\begin{equation}
  Z_{n}(\beta)=\frac{1}{2^{n}}\sum_{|u|=n}(\beta n-S_u)e^{\beta S_u-\frac{\beta^2}{2}n}
\end{equation}
to be large at a carefully chosen   generation $n$. Observe that
$ Z_{n}(\beta)$ is bounded from above  by $ e^{{n\beta^2 }/{2}}\sup_{x \in \mathbb{R}}(xe^{-\beta x})=(\beta e)^{-1}e^{n\beta^2/2},$
 with this maximum occurring when all the particles at generation $n$ are positioned at level $\beta n-{1}/{\beta}$. Thus, to ensure that the event $\{Z_{\infty}(\beta)>y\}$ occurs,  we select $n$ such that   $e^{\beta^2 n /2}\approx y$, then place all the particles at generation $n$ within $\Theta(1)$ distance to $\beta n-1/\beta $\footnote{We say that $a_n=\Theta(b_n)$ if $c b_n \le a_n \le C b_n$ for some constants $0<c<C<\infty$.}. Conditioned on this event  and  on the first $n$ generations, both $Z^{(u)}_{\infty}(\beta)$ and  $W^{(u)}_{\infty}(\beta)-1$ have  mean zero and finite $p$-th moment for $p\in(1,\gamma)$.
A law-of-large-numbers heuristic then suggests that 
\begin{align*}
  &Z_{\infty}(\beta)-Z_{n}(\beta)=\frac{1}{2^{n}}\sum_{|u|=n}e^{\beta S_{u}-\frac{\beta^2}{2}n}\left[Z^{(u)}_{\infty}(\beta)+(\beta n-S_{u})(W^{(u)}_{\infty}(\beta)-1)\right]\\
  &\approx e^{\beta^2 n/2}\bigg[\frac{1}{2^{n}}\sum_{|u|=n}Z^{(u)}_{\infty}(\beta)+\Theta(1)\frac{1}{2^{n}}\sum_{|u|=n}(W^{(u)}_{\infty}(\beta)-1)\bigg]=o(e^{\beta^2 n/2}) . 
\end{align*}
This indicates that the strategy of placing all individuals at generation $n$ around $\beta n - 1/\beta$ yields an effective lower bound. This is achieved on the event $\{\forall v\in\mathbb{T}_{n-k},\quad \xi_{v}\in[\beta-\frac{1}{\beta }2^{-k}, \beta-\frac{1}{\beta}2^{-k-1}]\}$, whose probability cost is of order $\exp(- \Theta(2^{n}))$. By relating this cost back to $y$ through the identity $2^{n}=(e^{\beta^2 n/2})^{\gamma}\approx y^{\gamma}$, we obtain the desired lower bound for the tail probability.

The deriving of the matching upper bound for $\P( Z_{\infty} (\beta)> y)$ is more involved. Even though we suspect that the previous strategy is (nearly) optimal, we cannot rule out better alternatives. Instead, we estimate the growth rate of positive moments of  $(Z_{\infty}(\beta))_+:=\max\{ Z_{\infty}(\beta),0\}$. A starting point is that the branching property implies the following functional recursive relation for the (shifted) derivative martingale limits (see~\eqref{eq-Z-x-n-1}), 
\begin{equation}
  Z^{[x]}_{\infty} (\beta) = \frac{e^{\beta^2/2}}{2}  \sum_{|u|=1} Z^{[x+S_u-\beta],(u)}_{\infty}(\beta).
\end{equation} 
In particular, taking the positive part, we obtain that 
 \begin{equation}\label{recursive-ineq}
 ( Z^{[x]}_{\infty} (\beta) )_+  \le  \frac{e^{\beta^2/2}}{2}  \sum_{|u|=1} (Z^{[x+S_u-\beta],(u)}_{\infty}(\beta))_+ .
 \end{equation}
Employing an inductive argument, we prove in \S~\ref{sec-pf-thm1} that if for any integer $k\ge 1$, 
\begin{equation}\label{eq-finite-moment}
  \sup_{x\in\mathbb{R}}\E\left[(Z^{[x]}_{\infty}(\beta))_+^{k}\right]<\infty,
\end{equation}
then the recursive relation  \eqref{recursive-ineq} implies that there exists a constant $C>0$ such that 
\begin{equation}
  \sup_{x\in\mathbb{R}}\E[(Z_{\infty}^{[x]}(\beta))_+^{k}]\leq C^k k^{k/\gamma}\quad\text{as}\quad k\to\infty.
\end{equation}   
By applying the classical criterion for sub-exponential distributions (see Lemma~\ref{lem-sub-exp} below), we can conclude that $ (Z_{\infty}^{[x]}(\beta))_+^{\gamma}$ has an exponential tail,  which   leads to the desired result.  

\begin{remark}
We stress that the preliminary polynomial bound~\eqref{eq-finite-moment} is much softer than the super-exponential decay of our main result Theorem~\ref{thm-Z-tail}, and we expect this strategy to be applied to a number of similar models up to technical modifications.
\end{remark}

We establish the boundedness of an arbitrary positive moment of $(Z^{[x]}_{\infty}(\beta))_+$ uniformly in the initial position by showing that 
$\sup_{x\in\mathbb{R}}\P(Z^{[x]}_{\infty}(\beta)>y)$ decays faster than any polynomial rate in $y$ as $y\to\infty$. To this end, we again approximate $Z^{[x]}_{\infty} (\beta)$ by $Z^{[x]}_{n} (\beta)$   for a carefully chosen  generation $n$ (either $n=\Theta(\ln y)$ or $n=\Theta(x)$ depending on the relation between $x$ and $y$), and reduce the problem to
  proving the super-exponential decay for two types of probability estimates,  
\begin{equation}
  \P( Z_{n}(\beta)>e^{ n \Delta})\quad\text{and}\quad\P(Z_{n}(\beta)>\theta nW_{n}(\beta)) 
\end{equation}  for any small $\Delta>0, \theta>0$.
Our main lemmas~\ref{lem-Polynomial-decay-low-initial-position} and~\ref{lem-Polynomial-decay-high-initial-position} are devoted to showing that both probabilities  exhibit a decay rate  at least as fast as $e^{- \Theta( n^2)}$.  

We now sketch the proof idea for the first estimate in the above display, since the two estimates are handled similarly. We proceed by analyzing the  contribution in $Z_{n}(\beta)$ from the $I$-level sets, which counts the  individuals at generation $n$ located in a certain interval $I$. 
Notice that the $(\beta n,\infty)$-level set contributes negatively  in $Z_{n}(\beta)$ and the contribution of the $(-\infty, \beta n/2)$-level set  in  $Z_{n}(\beta) $ is negligible. The crucial observation is that, in order to have $ Z_{n}(\beta)>e^{\Delta n}$, there must exist some $1\le k\le\beta n/2$ such that
\begin{enumerate}[label=(\roman*)]
  \item the size of the $(\beta n-k, \beta n-k+1)$-level set  is unusually large: the exponential growth rate (Malthusian  exponent)  exceeds its expectation by some positive constant depending on $\Delta$;
  \item the  contribution from the $(\beta n-k, \beta n-k+1)$-level set beats the $\frac{2}{\beta n}$ fraction of the negative contribution from the $(\beta n,\infty)$-level set, imposing a constraint on the  growth rate of the latter.
\end{enumerate}
If we only consider the large deviation event described in (i), the probability cost is $e^{-\Theta(n)}$. However, as established in \cite{AHS19}, conditioned on this large deviation event,  with probability $1-e^{-\Theta(n^2)}$, the branching random walk must hit  a specific "target region" (shaded in Figure \ref{f1}). 
On the other hand, based on the study of the lower deviation probability in branching random walk models~\cite{CH20,Zhang23}, the second event incurs a probability cost of  $e^{-\Theta(n^2)}$ if the branching random walk hits a certain "forbidden region" (pink in Figure \ref{f1}).  Elementary calculations demonstrate that the target region is entirely contained within the forbidden region. This contradiction provides us with the desired decay rate.

\begin{figure}[tbp]
  \includegraphics[width=0.65\linewidth]{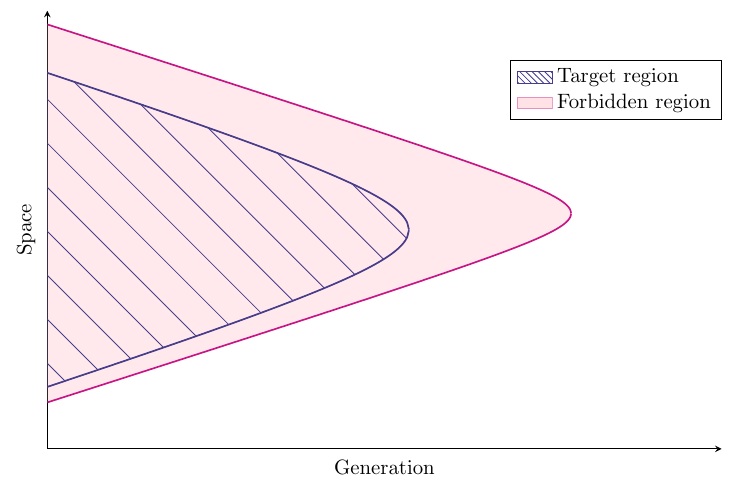}
  \caption{Target region and forbidden region.}\label{f1}
\end{figure}

\noindent\textbf{Outline of the paper.}
In~\S\ref{sec-lower-bnd}, we prove the lower bound for the tail probability $\P(Z_{\infty}(\beta)>y)$ as $y \to \infty$. In~\S\ref{sec:upper_bound_for_the_tail_probability}, we derive the corresponding upper bound of Theorem~\ref{thm-Z-tail}.  In~\S\ref{sec-LDP}, we present two large-deviation lemmas for level sets of a BRW. In~\S\ref{sec-poly-decay}, we show that $\P(Z_{\infty}(\beta)>y)$ decays faster than any polynomial rate in $y$. Finally, the proofs of Theorems~\ref{thm-Z-tail} and~\ref{thm-Z/W-tail} are gathered in~\S\ref{sec-pf-thm1} and~\S\ref{sec-pf-thm2}, respectively.

\subsection{Notation convention}\label{sec-notation} 
We fix some notation that will be used in the sequel. 
\begin{itemize}
  \item For each $n \ge 0$,  let $\mathcal{F}_{n}$ denote the $\sigma$-field generated by $(S_{u} : |u| \le n )$.  
  \item For any $v \in \mathbb{T}$, let $\mathbb{T}^{(v)}$ denote the subtree rooted at $v$. In particular $\mathbb{T}^{(\rho)}=\mathbb{T}$. For $n\geq 0$, $\mathbb{T}^{(v)}_{n}$ represents the $n$-th generation of $\mathbb{T}^{(v)}$ and $\mathbb{T}_{\le n}^{(v)}\coloneqq\cup_{k =0}^{n}\mathbb{T}^{(v)}_{k}$. Note that $\mathbb{T}^{(v)}_{k}\subset\mathbb{T}_{|v|+k}$. 
\item For any $v\in\mathbb{T}$, define the shifted process $S^{(v)}_{u}\coloneqq S_u-S_v$ for all $u\in\mathbb{T}^{(v)}$. By construction, conditionally on $ \mathcal{F}_{n}$, the random processes $(S^{(v)}_{u}:u\in\mathbb{T}^{(v)})_{|v|=n}$ are i.i.d. copies of the original branching random walk starting at zero. We refer to this property as the branching property. 
  \item For any functional $F$ of the branching random walk, we denote by $F^{(v)}$ the value of this functional acting on the process  $(S^{(v)}_{u}:u\in\mathbb{T}^{(v)})$. For example, for $n\in\mathbb{N}\cup\{\infty\}$, we define
\begin{equation}
  Z^{(v)}_{n}(\beta)\coloneqq\sum_{u\in\mathbb{T}^{(v)}_{n}}(\beta n-S^{(v)}_u)e^{\beta S^{(v)}_u-\Phi(\beta)n} \quad \text{ and }\quad Z^{(v)}_{\infty}(\beta) \coloneqq \lim_{n \to \infty} Z^{(v)}_{n}(\beta) 
\end{equation}
and similarly for $W^{(v)}_{n}(\beta)$,  $W^{(v)}_{\infty}(\beta)$.
\end{itemize}  
For $x \in \mathbb{R}$, define  $(x)_+ :=\max(x,0)$, write $\lceil x\rceil$ for the smallest integer surpassing $x$, and write $x\wedge y:=\min\{x,y\}$. For two nonnegative sequences $f(n), g(n)$, we write $f(n) \lesssim g(n)$ or $f(n)=O(g(n))$ if  there exists a positive constant $C$ such that $f(n) \le C g(n)$ for all $n \ge 1$. We write $f(n)= \Theta(g(n))$ if $f(n)=O(g(n))$ and $g(n)= O(f(n))$. The notation $f(n) =o(g (n))$ means that $f(n)/g(n) \to 0$ as $n \to \infty$.  We use $c$ and $C$ to represent positive constants which may vary from line to line.


\section{Lower bound for the tail probability}\label{sec-lower-bnd}
We now derive the lower bound for the tail probability in Theorem~\ref{thm-Z-tail}. Although this was previously established in~\cite{BV23}, we present a simplified proof which does not use the Gaussian step distribution. 
 
\begin{proof}[Proof of Theorem~\ref{thm-Z-tail}: Lower bound] Note that $Z_{n+k}(\beta)$ can be decomposed at the $n$-th generation of the branching random walk via the branching property:
\begin{equation*}
  Z_{n+k}(\beta)=\frac{1}{2^{n}}\sum_{|u|=n}e^{\beta S_{u}-\frac{\beta^2}{2}n}[Z^{(u)}_{k}(\beta)+(\beta n-S_{u})W^{(u)}_{k}(\beta)].
\end{equation*}
Taking the limit as $k\to\infty$, we obtain 
\begin{equation}
  Z_{\infty}(\beta)=\frac{1}{2^{n}}\sum_{|u|=n}e^{\beta S_{u}-\frac{\beta^2}{2}n}\left[ Z^{(u)}_{\infty}(\beta)+(\beta n-S_{u}) W^{(u)}_{\infty}(\beta)\right].
\end{equation} 
 
For any $n\geq0$, we introduce two ``good'' events $G_{n}^{1}\coloneqq \{  1/\beta \leq \beta n - S_{u}  \leq 2/\beta  \text{ for all }  |u|= n \}$ and $G_{n}^{2}\coloneqq \{  W^{(u)}_{\infty}(\beta)  \geq 1 , Z_{\infty}^{(u)}(\beta) \geq 0    \text{ for all }  |u|= n \}$. Note that on the event $G_{n}^{1} \cap G_{n}^{2}$, we have
\begin{equation}
  Z_{\infty}(\beta)\geq e^{\frac{\beta^2}{2}n}\cdot\beta^{-1}\cdot\min_{t\in [1,2]}(te^{-t})=e^{\frac{\beta^2}{2}n}\beta^{-1}\cdot 2e^{-2}. 
\end{equation}
Given a fixed $y >0$, we choose the smallest $n=n(y)\in\N$ satisfying $e^{\frac{\beta^2}{2}n}\geq\beta e^{2} y/2$. It follows that
\begin{equation}
   \P(Z_{\infty}(\beta) \geq y)\geq \P(G_{n}^{1}\cap G_{n}^{2}).
\end{equation} 
Recall that conditionally on  $\mathcal{F}_{n} \coloneqq \sigma(S_{u}:|u|\le n)$, the processes $\{(S^{(u)}):|u|=n\}$ are independent branching random walks with the same law as the original process. Let $p_{*}(\beta)\coloneqq\P(W_{\infty}(\beta)\geq 1, Z_{\infty}(\beta)\geq 0)>0$. It follows that  $\P(G^{2}_{n}\mid\mathcal{F}_{n})=p_{*}(\beta)^{2^{n}}$, which gives
\begin{equation}
  \P(Z_{\infty}(\beta)\geq y)\geq\P( G_{n}^{1} \cap G_{n}^{2})=\E\left[\P(G_{n}^{2}\mid \mathcal{F}_{n})\ind{G_{n}^{1}}\right] =  \P(G_{n}^{1})p_{*}(\beta)^{2^{n}}. 
\end{equation}
 
Next we bound the probability $\P(G_{n}^{1})$ from below. Fix $\eta=3/4$. For $0\leq k\leq n-1$, define 
\begin{equation}
   G^{1}_{n,k}\coloneqq  \Bigl\{ \,  \forall v\in\mathbb{T}_{n-k},\quad \xi_{v}\in \Bigl[\beta-\frac{1}{\beta2^{k}}, \beta-\frac{\eta}{\beta 2^{k}}   \Bigr]   \Bigr\} ,
\end{equation}
where $(\xi_v)_{v \in \mathbb{T}}$ are the i.i.d. Gaussians used to construct the branching random walks. On the event $\cap_{k=0}^{n-1}G^1_{n,k}$, every particle $u$ with $|u|=n$ satisfies
$\frac{2\eta}{\beta}(1-2^{-n})\leq \beta n-S_u\leq \frac{2}{\beta}(1-2^{-n}).$
Since $\eta>1/2$, for sufficiently large $n$ this gives $\bigcap_{k=0}^{n-1}G^{1}_{n,k}\subset G^{1}_{n}$. By independence of  $(\xi_v:v\in\mathbb{T})$, we have 
\begin{equation}
  \P\Bigl( \bigcap_{k=0}^{n-1}G^{1}_{n,k} \Bigr) =\prod_{k=0}^{n-1}\P(G^{1}_{n,k})=\prod_{k=0}^{n-1}\P\Bigl( \beta-\xi\in \Bigl[ \frac{\eta}{\beta2^{k}},\frac{1}{\beta 2^{k}}   \Bigr] \Bigr) ^{2^{n-k}}. 
\end{equation}
Since $\xi\sim\mathcal{N}(0,1)$ has a smooth density, there exists a constant $b>0$ depending on $\beta$ such that for all $k\geq 0$,
$
   \P ( \beta-\xi\in [ \frac{\eta}{\beta2^{k}},\frac{1}{\beta 2^{k}}   ] )  \geq\frac{1}{b 2^{k}}$.
Combining the above, we obtain 
\begin{equation}
  \P ( G^{1}_{n} ) \geq   \prod_{k=0}^{n-1}  \exp  \bigl(  - {2^{n-k}} \ln  ({b 2^{k}})  \bigr)  \geq   \exp  \Bigl(- 2^{n} \sum_{k=0}^{\infty} (k\ln 2+\ln b) 2^{-k}    \Bigr)   .
\end{equation}
As the series  $\sum_{k=0}^{\infty} (k\ln 2+\ln b) 2^{-k}  $ converges, we conclude that there exists some constant $c'>0$ depending only on $\beta$ such that $  \P(   Z_{\infty}(\beta) \geq y ) \ge  \P( G_{n}^{1} \cap G_{n}^{2}) 
  \ge    \exp \left(  - c' 2^{n}   \right)$. By our choice of $n=n(y)$,
\begin{equation}
   2^{n} = e^{ \frac{\beta^2}{2} n \cdot \frac{2 \ln 2}{\beta^2} } =  e^{ \frac{\beta^2}{2} n \cdot \gamma }  \leq   \Bigl(e^{\frac{\beta^2}{2}}\beta e^{2} /2\Bigr) ^{\gamma}  \,  y^{\gamma } .
\end{equation}  
This yields the desired result.
\end{proof} 

\section{Upper bound for the  tail probability}\label{sec:upper_bound_for_the_tail_probability}

\subsection{Preliminaries: large deviation probabilities of level sets}\label{sec-LDP}
We first introduce two key lemmas that provide super-exponential decay estimates for certain rare events associated with the level sets of the branching random walk. For any interval $I\subset\mathbb{R}$, define 
\begin{equation}\label{eq:DefCountingL}
  \L_{n}^{[x]}(I)\coloneqq\sum_{|u|=n}\ind{x+S_u\in I},  
\end{equation}
which counts the number of particles at generation $n$ whose positions, shifted by $x$, fall within the interval $I$. If $I=(a,b)$ we write $  \L_{n}^{[x]}(a,b)$ for  $\L_{n}^{[x]}(I)$ for simplicity.  
Introduce the auxiliary function
\begin{equation}
  \varPhi(n,x)\coloneqq n\ln 2-\frac{x^2}{2n}\quad\text{for}~n\ge 1~\text{and}~x\in\mathbb{R}.
\end{equation}
Note that $\varPhi(n,\beta n)=\Phi(\beta)n$ where $\Phi(\beta)\coloneqq\ln 2-\frac{\beta^2}{2}$ for $\beta\in\mathbb{R}$.

The following lemma recasts a result from~\cite{AHS19} in the context of branching Brownian motion. A notable difference in our formulation is that the bound holds uniformly over both the level $L$ and the size of the level set (denoted $M$ below). Moreover, the convergence can be made super-exponentially fast in $n$, which is not highlighted in~\cite{AHS19}. For completeness, we include a proof in~\S\ref{sec-LDP-proof-b}.

\begin{lemma}[{\cite[Section~3.2]{AHS19}}]\label{lem-LDP-level-set-below-the-line}
Suppose  $n\geq 1$, $L\in\mathbb{R}$, $x\in\mathbb{R}$ and $M>0$. Define the event (see Fig \ref{FigB}), 
\begin{equation}
  B^{[x]}_{n,L,M} \coloneqq   (A^{[x]}_{n, L,M} )^{c}\coloneqq\left\{\forall u\in\mathbb{T}_{\le n-1}:\varPhi\left(n-|u|,L-[x+S_u]\right)<M\right\} ,
\end{equation} 
where we recall that $\mathbb{T}_{\le n}=\{u\in\mathbb{T}: |u|\le n\}$.
Then for any $\epsilon>0$, there exists some constant $C$ depending only on $\epsilon$ such that for all $n\geq 1$,
\begin{equation}
\sup_{L,x\in\mathbb{R}}\sup_{M \ge \epsilon n }\P(\{\L^{[x]}_{n}(L,L+1)\geq e^{M}\}\cap B^{[x]}_{n,L,M-\epsilon n})\leq C\exp(-n^2).
\end{equation} 
\end{lemma}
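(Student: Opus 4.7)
My plan is to combine a tree decomposition at an intermediate generation with a Chernoff-type concentration argument, supplemented by an inductive control of the exponential moments of the subtree counts. By the branching property, for any $k \in [0,n]$,
$$\L_n^{[x]}(L, L+1) = \sum_{|u|=k} \L_{n-k}^{(u)}\big((L - x - S_u,\, L + 1 - x - S_u)\big),$$
and the summands are conditionally independent given $\mathcal{F}_k$. On the event $B^{[x]}_{n,L,M-\epsilon n}$, every $u$ with $|u|=k$ automatically satisfies both $\varPhi(n-k, L-x-S_u) < M - \epsilon n$ and the shifted sub-tree barrier $B^{(u)}_{n-k, L-x-S_u, M-\epsilon n}$. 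Hence, on the target event, the count is bounded above by $\sum_{|u|=k} \tilde{N}_u$, where $\tilde{N}_u := \L_{n-k}^{(u)}\big((L-x-S_u,L+1-x-S_u)\big)\,\mathbf{1}_{B^{(u)}_{n-k,\,L-x-S_u,\,M-\epsilon n}}$.

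The many-to-one lemma combined with the $\varPhi$-constraint at $u$ yields the conditional mean bound $\E[\tilde N_u \mid \mathcal{F}_k] \lesssim e^{M - \epsilon n}/\sqrt{n-k}$, uniformly in $S_u$. Via an induction on $n$ (or, alternatively, a Rosenthal-type moment estimate for the truncated subtree count), I would upgrade this to an exponential-moment bound of the form $\E[e^{\lambda \tilde N_u}\mid\mathcal{F}_k] \le \exp\big(\lambda e^{M-\epsilon n}/\sqrt{n-k} + C\lambda^2\big)$ for $\lambda$ in a suitable range. Applying Chernoff's inequality to the conditionally independent sum $\sum_{|u|=k}\tilde N_u$ aimed at the target $e^M$, and optimizing jointly over $\lambda$ and $k$, the resulting exponent captures the super-exponential rate $\exp(-\Theta(n^2))$ thanks to the $\epsilon n$ gap between $M$ and the worst-case value of $\varPhi$.

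The main obstacle will be controlling the conditional exponential moments of $\tilde N_u$ uniformly in the starting displacement $S_u$, since a first-moment Markov bound alone delivers only exponential decay. I would handle this either by an inductive strengthening of the lemma (carrying a super-exponential tail through smaller generations) or by a secondary truncation of $\tilde N_u$ at a scale well below $e^M$, with the overflow controlled separately via many-to-one applied to a restricted class of ancestral paths. The supremum over $\epsilon n \le M \le n\ln 2$ is absorbed uniformly, because all dependence on $M$ enters through the $\varPhi$ constraints and the bound $2^{n-k}\le e^{M-\epsilon n}$ becomes available for $k$ large enough, ensuring that the a priori deterministic bound $\tilde N_u \le 2^{n-k}$ is compatible with the target.
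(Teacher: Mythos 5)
There is a genuine gap, and it sits exactly where you flag it: the conditional exponential-moment bound $\E[e^{\lambda \tilde N_u}\mid\mathcal{F}_k]\le \exp\bigl(\lambda e^{M-\epsilon n}/\sqrt{n-k}+C\lambda^2\bigr)$ is not proved, and it is essentially as hard as the lemma itself. The barrier indicator $\mathbf{1}_{B^{(u)}}$ only constrains the function $\varPhi$ along ancestral lines, i.e.\ it caps the \emph{conditional means} of descendant counts; it does not prevent the subtree count from wildly exceeding its mean through anomalous early branching inside the allowed region. Indeed, normalized level-set counts in a BRW have at best polynomial/stretched-exponential concentration around their mean (compare the polynomial right tail of $D_\infty(\beta)$ quoted in the paper), so a uniform sub-Gaussian bound with an $O(1)$ variance proxy is not available, and a Rosenthal-type estimate yields only polynomial moments, far too weak to extract $e^{-\Theta(n^2)}$ from a Chernoff bound. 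Your fallback ``inductive strengthening of the lemma'' is precisely the missing content and is never formulated. There is also a quantitative inconsistency in your choice of $k$: for the Chernoff step you need the total conditional mean $2^k e^{M-\epsilon n}$ to stay below the target $e^M$, forcing $k\lesssim \epsilon n/\ln 2$, whereas the deterministic cap $2^{n-k}\le e^{M-\epsilon n}$ you invoke to tame individual summands forces $k\ge n-(M-\epsilon n)/\ln 2\ge \epsilon n/\ln 2$; these are compatible only at the extreme $M=n\ln 2$, so the ``absorbed uniformly in $M$'' claim does not hold.

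The paper's proof supplies the mechanism your sketch lacks for converting a first-moment (barrier) constraint into super-exponential concentration. It does not cut at a single generation; it coarse-grains time into $N=n^{1-\delta}$ blocks and space into mesh $n^{\delta'}$, localizes the walk via an event $E(n)=\{|S_u|\le 2n^{3/2}\}$ whose failure already costs $2^ne^{-2n^2}$, and takes a union bound over only $e^{o(n)}$ discretized trajectories compatible with the barrier. For each fixed trajectory, the count $\mathsf{Z}_N(f)$ of followers is dominated by an inhomogeneous Galton--Watson process, and the key input is the AHS19 tail bound (Lemma~\ref{lem-IGW-tail}): once at least $e^{\epsilon' n}$ particles follow the path, the barrier caps the remaining mean multiplication factor at $e^{M-\epsilon n+o(n)}$, and the concentration of a GW process started from $e^{\epsilon' n}$ individuals (with $\lambda=2^{-n^\delta}$) gives a \emph{doubly} exponential bound $\exp(-e^{\epsilon' n/2})$ on reaching $e^{M-\epsilon' n}$. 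That ``many particles at the first time the count is large'' step is the crux, and nothing in your single-generation decomposition plays its role; as written, your argument does not close.
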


The following lemma is a reformulation of~\cite[Theorem 1.4]{Zhang23}, see also \cite[Lemma~3.3 and (3.36)]{CH20}. Similarly, the bound is uniform in both the level $L$ and the size $M$. See~\S\ref{sec-LDP-proof-a} for a proof.

\begin{lemma}\label{lem-LDP-level-set-above-the-line}
Suppose that $n\geq 1$, $L\in\mathbb{R}$, $x\in\mathbb{R}$ and  $0\leq M\leq n\ln 2$. Define the event (see Fig \ref{FigB})
\begin{equation}\label{def-event-A}
  A^{[x]}_{n, L,M}\coloneqq  (B^{[x]}_{n, L,M})^{c}=\left\{\exists u\in\mathbb{T}_{\le n-1}:\varPhi\left(n-|u|,L-[x+S_u]\right)\geq M\right\} . 
\end{equation} 
Then for any $\epsilon>0$, there exists some constant $c$ depending only on $\epsilon$ such that  
\begin{equation}
  \sup_{L, x\in\mathbb{R}}\sup_{0\leq M\leq n\ln2}\P(\{\L^{[x]}_{n}(L,\infty)\leq e^{M}\}\cap A^{[x]}_{n,L,M+\epsilon n})\leq\exp(-cn^2). 
\end{equation}  
\end{lemma}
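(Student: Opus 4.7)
The strategy is to localize, on the event $A^{[x]}_{n,L,M+\epsilon n}$, the earliest generation at which some particle witnesses the condition on $\varPhi$, and then apply the branching property to reduce the claim to the non-uniform lower-deviation estimate for BRW level sets established in \cite[Theorem 1.4]{Zhang23} (equivalently, \cite[Lemma~3.3 and (3.36)]{CH20}).

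Concretely, I would define the stopping generation
\begin{equation}
\tau \coloneqq \min\bigl\{k \in \{0,1,\ldots,n\}: \exists v \in \mathbb{T}_k,\ \varPhi(n-k, L-[x+S_v]) \geq M+\epsilon n\bigr\},
\end{equation}
with $\min\emptyset=+\infty$, so that $\{\tau\leq n\}=A^{[x]}_{n,L,M+\epsilon n}$. On $\{\tau=k\}$ I would select a distinguished witness $u^{\star}\in\mathbb{T}_k$ via a deterministic tie-breaking rule, making $u^\star$ measurable with respect to $\mathcal{F}_\tau$. Since descendants of $u^\star$ at generation $n$ lying above $L$ already contribute to $\L^{[x]}_n(L,\infty)$, one has the pathwise bound
\begin{equation}
\L^{[x]}_n(L,\infty) \geq \L^{[x+S_{u^\star}],(u^\star)}_{n-\tau}(L,L+1).
\end{equation}
Conditionally on $\mathcal{F}_\tau$ the right-hand side is distributed as $\L_{n-\tau}^{[y]}(L,L+1)$ with $y=x+S_{u^\star}$ by the branching property, and the many-to-one lemma gives $\E[\L^{[y]}_{n-\tau}(L,L+1)]\asymp (n-\tau)^{-1/2}e^{\varPhi(n-\tau,L-y)}\geq (n-\tau)^{-1/2}e^{M+\epsilon n}$. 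Moreover the constraint $\varPhi(n-\tau,L-y)\leq (n-\tau)\ln 2$ forces $n-\tau\geq \epsilon n/\ln 2$, so at least a linear number of generations remain available.

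This reduces the lemma to the following uniform lower-deviation statement: for any $n'\geq c_0 n$ and any $y,L\in\mathbb{R}$ with $\varPhi(n', L-y)\geq M+\epsilon n$, one has $\P(\L_{n'}^{[y]}(L,L+1)\leq e^M)\leq\exp(-cn^2)$. This is exactly the conclusion of \cite[Theorem 1.4]{Zhang23} after reparametrization, and I expect the main technical obstacle to be verifying the uniformity in $L$, $y$ and $M$ claimed in the statement. Translation invariance reduces $(L,y)$ to the single parameter $L-y$, while the uniformity in $M$ should follow from monotonicity of the lower-deviation bound with respect to the gap between $\E[\L]$ and the threshold $e^M$; tracking how the constants depend on this gap is the delicate point. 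Once this uniform version is in hand, applying it to the $\mathcal{F}_\tau$-conditional distribution of the subtree rooted at $u^\star$ transfers the $\exp(-cn^2)$ rate to the original probability and yields the lemma.
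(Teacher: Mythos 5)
Your first reduction is exactly the paper's: introduce the first generation $\tau$ at which a particle witnesses $\varPhi(n-|u|,L-[x+S_u])\ge M+\epsilon n$, pick a measurable witness $u^\star$, note that $n-\tau\ge \epsilon n/\ln 2$, bound $\L^{[x]}_n(L,\infty)$ below by the corresponding level-set count in the subtree of $u^\star$, and apply the strong Markov/branching property. (Minor remark: it is enough, and slightly more natural, to keep the half-line count $\L^{(u^\star)}_{n-\tau}(L-[x+S_{u^\star}],\infty)$ rather than the unit interval $(L,L+1)$; the interval version is a strictly stronger lower-deviation statement than you need.)

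The gap is in the second step, which is where the real content of the lemma lies. After the reduction you need: for all $t\in[\epsilon n/\ln 2,\,n]$, all $\Lambda$ with $\varPhi(t,\Lambda)\ge M+\epsilon n$ and all $M\in[0,n\ln 2]$ simultaneously, $\P(\L_t(\Lambda,\infty)\le e^M)\le e^{-cn^2}$ with $c$ depending only on $\epsilon$. This is \emph{not} "exactly" \cite[Theorem 1.4]{Zhang23} (or \cite[Lemma 3.3, (3.36)]{CH20}) after reparametrization: those lower-deviation results are formulated for a fixed slope $\rho=\Lambda/t$ (and a fixed deviation scale), with constants that depend on these parameters, whereas here the slope ranges over the whole interval $(-\sqrt{2\ln2},\sqrt{2\ln2})$, the horizon $t$ ranges over $[\epsilon n/\ln 2, n]$, and $M$ ranges up to $n\ln 2$. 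Indeed the paper states explicitly, just before the lemma, that this uniformity is the point of its reformulation. Your appeal to "monotonicity of the lower-deviation bound in the gap" does not address how the constants depend on the slope and on $t/n$, so the key estimate is assumed rather than proved. The paper closes this gap with a self-contained bootstrap that avoids any uniform citation: run the BRW for $t_n=\frac{3}{\ln 2}\ln n$ generations, so that with probability $1-e^{-cn^2}$ (by \cite[(3.36)]{CH20}, or even a crude union bound with Gaussian tails) at least $n^2$ particles lie above $-\delta n$; a continuity estimate for $\varPhi$ shows each such particle starts a subtree whose target parameters still lie in $\mathcal{P}_{n,\epsilon/2,M}$; then a \emph{constant}-probability success bound ($\ge 1/2$) per subtree suffices, and this is obtained from Biggins' law of large numbers \cite{Biggins79} after discretizing the slope over the finite grid $\mathcal{Q}_\epsilon$, so the constant is uniform. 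Independence of the $n^2$ subtrees then gives the $(1/2)^{n^2}$ amplification and hence $e^{-cn^2}$. To make your proposal complete you would either have to reproduce such a bootstrap/amplification argument or genuinely verify that the constants in the cited lower-deviation theorems can be taken uniform over slope, horizon and $M$ — neither of which is done in the proposal as written.
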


\begin{figure}[tbp]
  \includegraphics[width=0.65\linewidth]{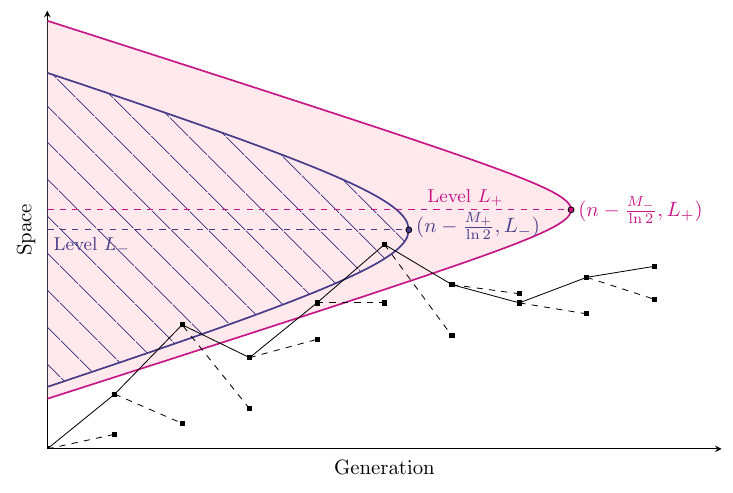}
  \caption
  {The event $A^{[0]}_{n,L+,M_-}$ (resp., $A^{[0]}_{n,L-,M_+}$) occurs when the BRW hits the pink (resp., shaded) region. Here $M_+>M_-$ and $L_+>L_-$.}
  \label{FigB}
\end{figure}


\subsection{Arbitrary-order polynomial decay}\label{sec-poly-decay}
We now prove that the right tail probability of $Z_{\infty}(\beta)_+ $ decays faster than any polynomial rate, uniformly in the initial position of the branching random walk. We first introduce the corresponding martingale when the branching random walk starts at an arbitrary position $x\in\mathbb{R}$. Define
\begin{equation}\label{eq-Z-x-n-1}
  Z^{[x]}_{n}(\beta)\coloneqq\frac{1}{2^{n}}\sum_{|u|=n}(\beta n-S_u-x)e^{\beta(S_u+x)-\frac{\beta^2}{2}n}=e^{\beta x}\left(Z_{n}(\beta)-xW_{n}(\beta)\right). 
\end{equation}
Taking the limit as $n\to\infty$, we set $Z^{[x]}_{\infty}(\beta)\coloneqq\lim_{n\to\infty}Z^{[x]}_{n}(\beta)=e^{\beta x}\left[Z_{\infty}(\beta)-xW_{\infty}(\beta)\right]$.

\begin{lemma}\label{lem-polynomial-decay}
  For any $K >0$,  there exists $C>0$ depending only on $\beta$ and $K$ such that for any $y\geq 1$,
  \begin{equation}
  \sup_{x\in\mathbb{R}}\P(Z_{\infty}^{[x]}(\beta)>y)\leq Cy^{-K}.
  \end{equation}
\end{lemma}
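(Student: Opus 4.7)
The plan is to approximate $Z^{[x]}_{\infty}(\beta)$ by $Z^{[x]}_n(\beta)$ at a carefully chosen generation $n$, and then reduce the tail bound to super-exponential decay estimates on two explicit events in terms of $Z_n(\beta)$ and $W_n(\beta)$. Using the identity $Z^{[x]}_{\infty}(\beta)=e^{\beta x}[Z_{\infty}(\beta)-xW_{\infty}(\beta)]$ together with the branching property, I would write
\begin{equation*}
Z^{[x]}_{\infty}(\beta)-Z^{[x]}_{n}(\beta)=\frac{1}{2^{n}}\sum_{|u|=n}e^{\beta(x+S_u)-\beta^2 n/2}\bigl[Z^{(u)}_{\infty}(\beta)+(\beta n-x-S_u)(W^{(u)}_{\infty}(\beta)-1)\bigr].
\end{equation*}
Conditionally on $\mathcal{F}_n$, the summands are centered and i.i.d.\ with finite $p$-th moment for some $p\in(1,\gamma)$ (as $\beta<\beta_c$), so a Rosenthal-type inequality will make the remainder negligible with respect to $y$ outside an event whose probability is super-polynomially small, expressed through polynomial control on $W_n(\beta)$ and $Z_n(\beta)$.

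I would then split according to the relative size of $x$ and $y$, choosing $n$ of order $\log y$ when $x$ is moderate compared to $y$ (the ``low initial position'' regime), and $n$ of order $|x|$ otherwise (the ``high initial position'' regime), so that the reference front $\beta n$ effectively absorbs $x$. In either regime, rewriting $Z^{[x]}_n(\beta)=e^{\beta x}[Z_n(\beta)-xW_n(\beta)]$ and tracking the surviving thresholds reduces the event $\{Z^{[x]}_n(\beta)>y/2\}$ to a union of
\begin{equation*}
\{Z_n(\beta)>e^{n\Delta}\}\quad\text{and}\quad\{Z_n(\beta)>\theta n\,W_n(\beta)\}
\end{equation*}
for some small $\Delta,\theta>0$ depending only on $\beta$ and on the target exponent $K$.

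The main technical obstacle, and the heart of the argument, is to establish $e^{-\Theta(n^2)}$ decay for each of these two events; this is exactly where the large deviation inputs from \S\ref{sec-LDP} enter. Following the heuristic sketched in the introduction, I would decompose $Z_n(\beta)$ dyadically over the level sets $\L^{[0]}_n(\beta n-k,\beta n-k+1)$ for $1\le k\le \beta n/2$, noting that the positive contributions come from levels below $\beta n$ while levels above contribute negatively. If $Z_n(\beta)$ is unusually large with respect to either $e^{n\Delta}$ or $\theta n W_n(\beta)$, then there must exist some $k$ at which simultaneously (i) the growth rate of $\L^{[0]}_n(\beta n-k,\beta n-k+1)$ exceeds its expectation by a fixed constant, forcing the BRW into the shaded ``target region'' of Figure~\ref{f1} via Lemma~\ref{lem-LDP-level-set-above-the-line}, and (ii) the level set $\L^{[0]}_n(\beta n,\infty)$ is atypically small, forcing the BRW to avoid the pink ``forbidden region'' via Lemma~\ref{lem-LDP-level-set-below-the-line}. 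Elementary geometry places the target region inside the forbidden region, so the conjunction has probability at most $e^{-\Theta(n^2)}$. Plugging in either $n\asymp\log y$ or $n\asymp|x|$ then converts this super-exponential decay into polynomial decay of any prescribed order $K$ in $y$, uniformly in $x$.
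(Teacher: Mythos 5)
Your proposal follows essentially the same route as the paper: approximate $Z^{[x]}_{\infty}(\beta)$ by $Z^{[x]}_{n}(\beta)$ with $n\asymp\ln y$ or $n\asymp x$ depending on the size of $x$, reduce to the two super-exponential estimates of Lemmas~\ref{lem-Polynomial-decay-low-initial-position} and~\ref{lem-Polynomial-decay-high-initial-position}, and prove those by the dyadic level-set decomposition, the pigeonhole step, and the two large-deviation lemmas combined with the target-region-inside-forbidden-region geometry. The only deviations are cosmetic and harmless: the paper controls the remainder $Z^{[x]}_{\infty}-Z^{[x]}_{n}$ simply by the known exponential $L^1$ convergence plus Markov rather than a Rosenthal bound, the low-initial-position estimate has to be run for the shifted quantity $Z^{[x]}_{n}$ uniformly in $x$ (as in Lemma~\ref{lem-Polynomial-decay-low-initial-position}) rather than for $Z_n$ alone, and your two lemma citations are swapped — an unusually large level set forces the BRW into the target region via Lemma~\ref{lem-LDP-level-set-below-the-line}, while an atypically small level set above $\beta n$ is incompatible with hitting the forbidden region via Lemma~\ref{lem-LDP-level-set-above-the-line}.
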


We first present the proof of Lemma~\ref{lem-polynomial-decay} assuming two auxiliary results, which will be proved afterwards.

\begin{lemma}\label{lem-Polynomial-decay-low-initial-position}
For any $\Delta>0$, there exist positive constants $c_{\eqref{eq-Polynomial-decay-low}}(\Delta), C_{\eqref{eq-Polynomial-decay-low}}(\Delta)$ depending only on $\Delta$ and $\beta$ such that for all $n\geq 1$,
  \begin{equation}\label{eq-Polynomial-decay-low}
  \sup_{x\in\mathbb{R}}\P(Z_{n}^{[x]}(\beta)\geq ne^{n\Delta})\leq C_{\eqref{eq-Polynomial-decay-low}}(\Delta)\exp\left\{-n^{2}c_{\eqref{eq-Polynomial-decay-low}}(\Delta)\right\}.
  \end{equation}
\end{lemma}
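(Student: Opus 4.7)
The plan is to follow the scheme outlined in the introduction (Figure~\ref{f1}): decompose $Z_n^{[x]}$ into level-set contributions near the ballistic level $\beta n$, identify a dominating dyadic level $k^*$ by pigeonhole, and combine Lemmas~\ref{lem-LDP-level-set-below-the-line} and~\ref{lem-LDP-level-set-above-the-line} by exhibiting a geometric incompatibility between their associated target and forbidden regions. Throughout I set $\tilde S_u := x + S_u$ for the shifted BRW.

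First I would write
\begin{equation*}
Z_n^{[x]}(\beta) = \frac{1}{2^n}\sum_{|u|=n}(\beta n - \tilde S_u)e^{\beta \tilde S_u - \beta^2 n/2} = P_{\mathrm{far}} + \sum_{k=1}^{\lfloor\beta n/2\rfloor} P_k - N,
\end{equation*}
where $P_k$ gathers the positive contributions of particles in $I_k := [\beta n - k, \beta n - k + 1)$, $P_{\mathrm{far}}$ the contributions from $\tilde S_u < \beta n/2$, and $N \geq 0$ those from $\tilde S_u \geq \beta n$. A direct bound using the monotonicity of $s \mapsto (\beta n - s)e^{\beta s - \beta^2 n/2}$ on $(-\infty, \beta n - 1/\beta)$ gives $P_{\mathrm{far}} \lesssim n e^{-\beta^2 n /4}$, which is negligible compared with $n e^{n\Delta}$. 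On the event $\{Z_n^{[x]} \geq n e^{n\Delta}\}$, a pigeonhole over the $O(n)$ values of $k$ then yields some $k^* \in [1, \lfloor \beta n/2 \rfloor]$ with $P_{k^*} \gtrsim (ne^{n\Delta} + N)/n$. Using $P_{k^*} \asymp k^* N_{k^*} e^{\beta^2 n/2 - \beta k^*}/2^n$ with $N_k := \mathcal{L}_n^{[x]}(I_k)$, and bounding $N \geq \mathcal{L}_n^{[x]}(\beta n + 1, \infty) \cdot e^{\beta^2 n/2 + \beta}/2^n$, this translates into the two conclusions
\begin{equation*}
N_{k^*} \geq e^{M_-(k^*)} \qquad\text{and}\qquad \mathcal{L}_n^{[x]}(\beta n + 1, \infty) \leq e^{M_+(k^*)},
\end{equation*}
where $M_-(k^*)$ exceeds the Malthusian exponent $\varPhi(n, \beta n - k^*)$ by some explicit $\delta(\Delta, \beta) \cdot n$, and $M_+(k^*) \approx n\ln 2 - \beta^2 n/2 + \log N$.

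Lemma~\ref{lem-LDP-level-set-below-the-line} with $L = \beta n - k^*$ and $M = M_-(k^*)$ shows that the first inequality forces the target region $A^{[x]}_{n, \beta n - k^*, M_-(k^*) - \epsilon n}$ to occur, up to an error $Ce^{-n^2}$, while Lemma~\ref{lem-LDP-level-set-above-the-line} with $L = \beta n + 1$ and $M = M_+(k^*)$ gives an error $e^{-cn^2}$ on the second inequality combined with the forbidden region $A^{[x]}_{n, \beta n + 1, M_+(k^*) + \epsilon n}$. The heart of the argument, and the main obstacle, is the geometric containment of the target within the forbidden region: using $\varPhi(m,y) = m\ln 2 - y^2/(2m)$, the difference for a common ancestor $u$ with $\tilde S_u = (\beta n - k^*) + \eta$ evaluates to
\begin{equation*}
\varPhi(n-|u|, \beta n + 1 - \tilde S_u) - \varPhi(n-|u|, \beta n - k^* - \tilde S_u) = -\frac{(k^*+1)(k^* + 1 + 2\eta)}{2(n-|u|)},
\end{equation*}
so the containment reduces to verifying $M_-(k^*) - M_+(k^*) \geq 2\epsilon n + (k^*+1)(k^*+1+2\eta)/(2(n-|u|))$ for every relevant $u$. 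Since $M_-(k^*) - M_+(k^*)$ carries a linear gain $\beta k^* + n\delta$ and the constraint $M_-(k^*) \leq (n - |u|)\ln 2$ forces $n - |u| \geq \kappa(\alpha, \delta) \cdot n$ when $k^* = \alpha n$, the inequality can be checked by an elementary quadratic computation for $\epsilon = \epsilon(\Delta, \beta)$ small, on the effective range $\alpha \in [0, \beta - \sqrt{2\delta}]$ (outside which the target event is already empty). The case where $N$ is itself abnormally large, so that $M_+(k^*)$ would not give a lower-deviation window, is handled separately by a crude moment bound on $N$ via the many-to-one formula, which after exponential tilt reduces to a Gaussian estimate on $(S_n - \beta n)_+$. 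A final union bound over $k^*$ and over dyadic slices of $N$ and $N_{k^*}$ gives the claimed estimate $C_{\eqref{eq-Polynomial-decay-low}}(\Delta) e^{-c_{\eqref{eq-Polynomial-decay-low}}(\Delta) n^2}$, with uniformity in $x$ inherited from the $\sup_x$ in Lemmas~\ref{lem-LDP-level-set-below-the-line} and~\ref{lem-LDP-level-set-above-the-line}.
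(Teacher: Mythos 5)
You follow the same overall strategy as the paper (isolate the contributions near level $\beta n$, pigeonhole over the dyadic level $k^*$, then play Lemma~\ref{lem-LDP-level-set-below-the-line} against Lemma~\ref{lem-LDP-level-set-above-the-line} via a containment of the target region in the forbidden region), but there is a genuine gap at the heart of your argument: you fix the upper comparison level at $\beta n+1$, i.e.\ you constrain $\mathcal{L}^{[x]}_n(\beta n+1,\infty)$. With that choice the gap $M_-(k^*)-M_+(k^*)$ can be as small as $\beta(k^*+1)$ up to logarithmic corrections: the pigeonhole gives $\mathcal{L}^{[x]}_n(I_{n,k^*})\gtrsim e^{\beta k^*}2^n e^{-\beta^2 n/2}\max\{e^{\Delta n},N/n\}$ while $\mathcal{L}^{[x]}_n(\beta n+1,\infty)\lesssim 2^n e^{-\beta^2 n/2}N$, so the $\Delta n$ (resp.\ $\ln N$) terms cancel in the worst case $N\asymp ne^{\Delta n}$, and the extra ``$+n\delta$'' you claim in the gain has no source. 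This is fatal precisely in the natural regime $k^*=o(n)$ (recall the optimal strategy puts particles at $\beta n-1/\beta$): each of Lemmas~\ref{lem-LDP-level-set-below-the-line} and~\ref{lem-LDP-level-set-above-the-line} costs a slack of order $\epsilon n$ with $\epsilon>0$ fixed, plus the slack from discretizing the size of the level set, so the containment $A^{[x]}_{n,\beta n-k^*,M_--\epsilon n}\subset A^{[x]}_{n,\beta n+1,M_++\epsilon n}$ would require $\beta(k^*+1)$ to dominate both the curvature correction $(k^*+1)\bigl(y+\tfrac{k^*+1}{2}\bigr)/(n-|u|)$ and an additional $\Theta(\epsilon n)$, which is impossible for bounded (or sublinear) $k^*$ and large $n$; your quadratic endpoint check only has the $O(1)$ term $(k^*+1)^2/4$ available where the paper uses a $\Theta(n^2)$ term.

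The paper's proof fixes exactly this point: instead of level $\beta n+1$ it constrains $\mathcal{L}^{[x]}_n(\beta n+\ell,\infty)$ with $\ell$ chosen so that $1\le\ell\le\delta n$ and $\ell+k\ge\delta n$, $\delta=2\sqrt{\epsilon}$. The weight $\ell e^{\beta\ell}$ carried by particles above $\beta n+\ell$ in the negative part produces the gain $\beta(\ell+k)\ge\beta\delta n$, of which $\delta(\ell+k)\ge\delta^2 n=4\epsilon n$ survives after replacing $\beta$ by $\beta'=\beta-\delta$ in the curvature bound, and this is what absorbs the $\sim3\epsilon n$ of slack; the endpoint check at $t=n$ then also has $(\ell+k)^2/4\ge\delta^2n^2/4$ at its disposal. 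Two secondary points: your fallback for ``abnormally large $N$'' via a first-moment/many-to-one bound can only yield $e^{-\Theta(n)}$, not $e^{-cn^2}$ — but no such separate case is needed once one unions over the dyadic size of $\mathcal{L}^{[x]}_n(I_{n,k^*})$, as the paper does and as you partially suggest, since the constraint on the upper level set is then relative to that size; and your bound $P_{\mathrm{far}}\lesssim ne^{-\beta^2 n/4}$ is not correct (the right bound is $\beta n/2$), though either way this term is negligible.
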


\begin{lemma}\label{lem-Polynomial-decay-high-initial-position}
For any $\theta>0$, there exist positive constants $c_{\eqref{eq-Polynomial-decay-high}}(\theta), C_{\eqref{eq-Polynomial-decay-high}}(\theta)$ depending only on $\theta$ and $\beta$ such that for all $n\geq 1$,
\begin{equation}\label{eq-Polynomial-decay-high}
  \P\left(Z_{n}(\beta)\geq\theta nW_{n}(\beta)\right)\leq C_{\eqref{eq-Polynomial-decay-high}}(\theta)\exp\left\{-n^{2}c_{\eqref{eq-Polynomial-decay-high}}(\theta)\right\}.  
\end{equation}
\end{lemma}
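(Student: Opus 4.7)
The plan is to adapt the level-set decomposition strategy sketched in the paper for Lemma~\ref{lem-Polynomial-decay-low-initial-position}, now with the exponential threshold $ne^{n\Delta}$ replaced by the linear one $\theta n$. First I would split the particles at generation~$n$ according to the integer part of $\beta n - S_u$, setting
\[
\L_n^{(k)} \coloneqq \L_n(\beta n-k,\beta n-k+1), \qquad
W_n^{(k)} \coloneqq \frac{1}{2^n}\sum_{u\in\L_n^{(k)}} e^{\beta S_u-\beta^2 n/2},
\]
and defining $Z_n^{(k)}$ analogously, so that $Z_n(\beta)=\sum_{k\in\Z} Z_n^{(k)}$ with $Z_n^{(k)}\asymp k\,W_n^{(k)}$ for $k\ge 1$ and $Z_n^{(k)}\le 0$ for $k\le 0$ (the latter being the negative contribution of particles above $\beta n$). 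As in the sketch, the far-tail level sets with $S_u<\beta n/2$ can be discarded by a direct first-moment bound on the additive martingale.

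The key observation is that the event $\{Z_n(\beta) \geq \theta n[W_n(\beta)-1]\}$ forces an anomalous geometric configuration at generation~$n$. Rewriting this inequality as a sum over the level sets and applying a pigeonhole argument, one finds some $k=\Theta(\theta n)$ with $1\le k\le \beta n/2$ such that
\begin{enumerate}[(i)]
\item the size of $\L_n^{(k)}$ is unusually large---its empirical Malthusian exponent exceeds the expected one $\varPhi(n,\beta n-k)$ by at least some constant $\delta=\delta(\theta)>0$; and
\item the negative contribution from $\L_n(\beta n,\infty)$ is anomalously small, forcing its size to lie strictly below the Malthusian curve by at least $\delta n$.
\end{enumerate}
By the contrapositive of Lemma~\ref{lem-LDP-level-set-below-the-line} applied to~(i), outside an event of probability $e^{-\Theta(n^2)}$ the BRW must reach the ``target region'' $A^{[0]}_{n,\beta n-k,\varPhi(n,\beta n-k)+\delta n/2}$ before generation~$n$. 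By the contrapositive of Lemma~\ref{lem-LDP-level-set-above-the-line} applied to~(ii), outside another event of probability $e^{-\Theta(n^2)}$ the BRW must avoid the ``forbidden region'' $A^{[0]}_{n,\beta n,\varPhi(n,\beta n)-\delta n/2}$ (the pink set in Figure~\ref{f1}). An elementary planar-geometric comparison, as in the sketch, shows that the target region is strictly contained in the forbidden region, so both conditions cannot hold simultaneously except on a set of probability $e^{-\Theta(n^2)}$. A union bound over the $O(n)$ admissible values of $k$ only costs a polynomial factor absorbed into $C_{\eqref{eq-Polynomial-decay-high}}(\theta)$.

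The main obstacle will be the uniform calibration of $\delta(\theta)$ and of the $\epsilon$ parameter across all admissible $k\in[1,\beta n/2]$, so that the target region supplied by Lemma~\ref{lem-LDP-level-set-below-the-line} sits strictly inside the forbidden region supplied by Lemma~\ref{lem-LDP-level-set-above-the-line} irrespective of the chosen~$k$; if containment fails even for a single $k$, the $e^{-\Theta(n^2)}$ bound does not survive the union bound. A secondary technical issue is to verify that the aggregated contribution from $\L_n(\beta n,\infty)$ and from $\L_n(-\infty,\beta n/2)$ to both $Z_n$ and $W_n$ can be controlled with $e^{-\Theta(n^2)}$ cost, which should follow from routine Chebyshev-type bounds on the additive martingale combined with Lemmas~\ref{lem-LDP-level-set-above-the-line} and~\ref{lem-LDP-level-set-below-the-line}. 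Apart from these bookkeeping points, the scheme is strictly parallel to the one outlined for Lemma~\ref{lem-Polynomial-decay-low-initial-position}.
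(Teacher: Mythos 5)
Your overall picture (the two level-set lemmas plus a ``target region inside forbidden region'' contradiction) is the right flavor, but the reduction that is supposed to feed it is not correct, and it deviates from the paper's proof exactly where the difficulty lies. The paper's first move is to absorb the additive-martingale term by recentering: up to normalization, $Z_n(\beta)-\theta n W_n(\beta)$ equals $\sum_{|u|=n}(\tilde\beta n-S_u)e^{\beta S_u}$ with $\tilde\beta=\beta-\theta$, so the relevant split is at the shifted level $\tilde\beta n$, and \emph{everything above $\tilde\beta n$} --- in particular the bulk of $W_n$, which lives near $\beta n$ --- is a negative contribution. You keep the decomposition anchored at $\beta n$ and never say how the term $\theta n W_n$ is beaten. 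As a consequence, your pigeonhole conclusion --- some $k=\Theta(\theta n)$ for which \emph{simultaneously} (i) $\L_n(\beta n-k,\beta n-k+1)$ exceeds its expected size $e^{\varPhi(n,\beta n-k)}$ by $e^{\delta n}$ and (ii) $\L_n(\beta n,\infty)$ lies $e^{-\delta n}$ below the Malthusian curve --- is simply not implied by the event. For example, the event holds on configurations with no particles above $\tilde\beta n$ and all lower level sets of typical size (then (i) fails), and it also holds on configurations where one level set below $\tilde\beta n$ is boosted while $\L_n(\beta n,\infty)$ is exactly typical (then (ii) fails). Moreover, pegging both thresholds to deterministic typical sizes destroys the mechanism that produces $e^{-\Theta(n^2)}$: an upper deviation of a single level set is not super-exponentially unlikely by itself, and the paper's $n^2$ rate comes from coupling the required size of the low level set to the \emph{actual random} size $e^{bn}$ of the high one. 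Concretely, the paper first applies Lemma~\ref{lem-LDP-level-set-above-the-line} to get, up to an $e^{-cn^2}$ event, $\L_n(\tilde\beta n+\delta n,\infty)\ge e^{\Phi(\tilde\beta+\delta)n-\epsilon n}$ (so the negative part is huge and the additive constant is negligible); then on the target event the positive part from $(\tilde\beta n-\lambda n,\tilde\beta n]$ must be at least half of it, and a pigeonhole in both $k\le\lambda n$ (not $k=\Theta(\theta n)$) and the dyadic scale $b$ of $\L_n(\tilde\beta n+\delta n,\infty)$ produces the two events to which Lemmas~\ref{lem-LDP-level-set-below-the-line} and~\ref{lem-LDP-level-set-above-the-line} are applied with $b$-dependent thresholds, followed by the containment computation.

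A symptom that your geometry has degenerated: your ``forbidden region'' $A^{[0]}_{n,\beta n,\varPhi(n,\beta n)-\delta n/2}$ always contains the root, since its threshold is below the root value $\varPhi(n,\beta n)$, so ``the BRW must avoid it'' is vacuously impossible; with your thresholds Lemma~\ref{lem-LDP-level-set-above-the-line} already says that (ii) alone has probability $e^{-\Theta(n^2)}$, leaving no containment argument to run --- another sign that the real work (handling $\theta n W_n$ and the particles between $\tilde\beta n$ and $\beta n$) was lost at the reduction step. Two smaller points: the far-left truncation is handled deterministically in the paper (via $\sup_{y\ge\lambda n}ye^{-\beta y}$, with cutoff $\lambda n$ below $\tilde\beta n$, not $\beta n/2$), not by Chebyshev-type bounds; and the negative contribution from high particles is something you need a \emph{lower} bound on (it is your friend in forcing the anomaly), which is exactly what the event $E(n)$ in the paper provides.
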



\begin{proof}[Proof of Lemma \ref{lem-polynomial-decay} admitting Lemmas~\ref{lem-Polynomial-decay-low-initial-position} and~\ref{lem-Polynomial-decay-high-initial-position}.]

Recall that both $W_n(\beta)$ and $Z_n(\beta)$ converge exponentially in $L^1$ to their respective limits $W_\infty(\beta)$ and $Z_\infty(\beta)$ (e.g.~\cite[Proposition~3.2]{CdRM24}). Consequently, there exist constants $C_0, c_0>0$ such that
\begin{equation}\label{eq-replace-Z-inf-by-Z-n}
  \E\left[|W_{\infty}(\beta)-W_{n}(\beta)|\right]+\E\left[|Z_{\infty}(\beta)-Z_{n}(\beta)|\right]\leq C_0e^{-c_{0}n},\quad\forall n\geq 1.
\end{equation}
Applying Markov's inequality and the expression of $|Z^{[x]}_{\infty}(\beta)-Z^{[x]}_{n}(\beta)|$ in~\eqref{eq-Z-x-n-1}, we obtain 
\begin{align}\label{eq-Z-x-tail-bound}
  \P(Z^{[x]}_{\infty}(\beta)>2y)&\leq y^{-1}\E\left[|Z^{[x]}_{\infty}(\beta)-Z^{[x]}_{n}(\beta)|\right]+\P\left(Z^{[x]}_{n}(\beta)>y\right)\\
  &\leq C_{0}y^{-1}(1+|x|)e^{\beta x}e^{-c_{0}n}+\P\left(Z_{n}^{[x]}(\beta)>y\right)
\end{align} 
for all $n\geq 1$. Given $K>0$, we choose a sufficiently large constant $A$ such that
\begin{equation}
  A\beta\geq K\quad\text{and}\quad  A\frac{3\beta}{c_0}c_{(\ref{eq-Polynomial-decay-high})}(\frac{c_0}{4\beta})\geq K.
\end{equation}
We now divide the proof into two cases.

\smallskip
\noindent
\underline{\textit{Case 1.}}  
Suppose  $x\in(-\infty, A\ln y]$ and apply inequality~\eqref{eq-Z-x-tail-bound} with $n=\lceil\frac{\beta A+2 K}{c_0}\ln y\rceil$. For large $y$, by the property of $x\mapsto |x|e^{\beta x}$ and our choice of $n$ and $A$, we have 
\begin{equation}
  C_{0}(1+|x|)e^{\beta x}e^{-c_{0}n}y^{-1}\lesssim y^{-1}(1+A\ln y)e^{-2K\ln y}\leq e^{-K\ln y}.
\end{equation}
Now set $\Delta=\frac{c_0}{\beta A+4K}$ so that $y \geq n e^{n\Delta}$ for large $y$. It follows from Lemma~\ref{lem-Polynomial-decay-low-initial-position} that 
\begin{equation}
  \P(  Z_{n}^{[x]}(\beta)> y )  \leq  \P(  Z_{n}^{[x]}(\beta) \geq n e^{\Delta n } ) \lesssim e^{ - n^2 c_{\eqref{eq-Polynomial-decay-low}}(\Delta) } \leq e^{-K \ln y} .
\end{equation} 
Together with inequality~\eqref{eq-Z-x-tail-bound}, this implies the desired result.

\smallskip
\noindent
\underline{\textit{Case 2.}} Suppose $x>A\ln y$. We now choose $n=\lceil\frac{3\beta}{c_0}x\rceil$ so that
\begin{equation}
  C_0(1+|x|)e^{\beta x}e^{-c_{0}n}y^{-1}\leq C_{0}(1+|x|)e^{-2\beta x}\leq e^{-K\ln y}
\end{equation}
provided that $y$ is sufficiently large. {Moreover, for $y$ sufficiently large, this choice of $n$ gives $x\geq c_0 n/(4\beta)$.} Applying Lemma~\ref{lem-Polynomial-decay-high-initial-position} and by our choice of $A$, we get 
\begin{align}
  \P\left(Z_{n}^{[x]}(\beta)>y\right)&\leq\P\left(Z_{n}(\beta)-xW_{n}(\beta)>0\right)\le\P\Bigl(Z_{n}(\beta)\geq\frac{c_{0}n}{4 \beta}W_{n}(\beta)\Bigr)\\
  &\lesssim
  \exp\Bigl(-n^2 c_{(\ref{eq-Polynomial-decay-high})}(\frac{c_0}{4 \beta})\Bigr)\leq e^{-K\ln y}. 
\end{align} 
The desired result then follows from inequality~\eqref{eq-Z-x-tail-bound}.
\end{proof}


The rest of this subsection is devoted to proving Lemmas~\ref{lem-Polynomial-decay-low-initial-position} and~\ref{lem-Polynomial-decay-high-initial-position}. Before stating the proofs, we record a deterministic comparison of level-set events. It will be used later to rule out the simultaneous occurrence of a large level set at level $L$ and a relatively small level set at the shifted level $L+h$.

\begin{lemma} 
\label{lem-level-set-event-comparison}
Fix $n\geq 1$, $x,L\in\mathbb{R}$, $h>0$, and
$M_{\mathrm{A}},M_{\mathrm{B}}\in\mathbb{R}$ with
$0<M_{\mathrm{A}}\leq n\ln 2$. Then the non-intersection relation
\begin{equation}\label{eq-level-set-comparison-disjoint}
 A^{[x]}_{n,L,M_{\mathrm{A}}}
 \cap
 B^{[x]}_{n,L+h,M_{\mathrm{B}}}
 =\emptyset
\end{equation}
holds, provided that there exists $q\in(0,\beta_c)$ such that
\begin{subequations}\label{eq-level-set-comparison-conditions}
\begin{align}
  M_{\mathrm{A}} -  qh & \ge  M_{\mathrm{B}},
 \label{eq-level-set-comparison-height}\\
 \frac{M_{\mathrm{A}}}{\ln 2} & \geq \frac{h}{2q},
 \label{eq-level-set-comparison-left-endpoint}\\
 [2\ln2-q^2]n^2 & -2 M_{\mathrm{A}}n + qnh
 \leq\frac{h^2}{4}. 
 \label{eq-level-set-comparison-right-endpoint}
\end{align}
\end{subequations}
\end{lemma}

\begin{proof}[Proof of Lemma~\ref{lem-Polynomial-decay-low-initial-position}]
For any measurable set $A\subset\mathbb{R}$, define 
\begin{equation}
  Z^{[x]}_{n}(\beta, A)\coloneqq\frac{e^{n\beta^2/2}}{2^{n}}\sum_{|u|=n}\mathbf{1}_{\{x+S_u\in A\}}[\beta n-(S_u+x)]e^{\beta[(S_u+x)-\beta n]}.
\end{equation}
In particular $Z^{[x]}_{n}(\beta)=Z^{[x]}_{n}(\beta,\mathbb{R})$. Let $I_{n}\coloneqq(\frac{\beta}{2}n,\infty)$. Since the function $y\mapsto ye^{-\beta y}$ is decreasing in $(1/\beta,\infty)$,  we have  for large $n$,
\begin{equation}
  Z^{[x]}_n\left(\beta, I_{n}^{c}\right)\leq e^{\frac{\beta^2}{2}n}\sup_{y\geq\beta n/2}ye^{-\beta y}\leq\frac{\beta n}{2}=o(e^{\Delta n}).  
\end{equation}
It suffices to show that for any fixed $\Delta$ > 0, there exist $c,C > 0$
depending only on $\Delta,\beta$   such that 
\begin{equation}
 \P \biggl( Z_n^{[x]}(\beta,I_n)\geq \beta n e^{\Delta n } /2 \biggr) \le Ce^{-cn^2} . 
\end{equation} 
 
\smallskip 
\noindent
\underline{\textit{Step 1.}} 
Define $I_{n,k}\coloneqq(\beta n-k, \beta n-k+1]$ for $1\leq k\leq\frac{\beta}{2}n$ and let $J_{n}\coloneqq(\beta n, \infty)$. We assume that $\beta n/2$ is an integer up to minor changes in the sequel using $\lceil\beta n/2\rceil$. We decompose  $Z^{[x]}_n \left( \beta,  I_{n}  \right)$ as follows:
\begin{equation}
  Z^{[x]}_n\left(\beta, I_{n}\right)=\sum_{k=1}^{\beta n/2}Z^{[x]}_n\left(\beta,I_{n,k}\right)-| Z^{[x]}_n\left(\beta, J_{n}\right)|. 
\end{equation}
Define the events $  E^{[x]}_{n,k} \coloneqq \left\{  Z^{[x]}_n \left( \beta,  I_{n,k}  \right) - \frac{2}{\beta n} | Z^{[x]}_n \left( \beta,  J_{n}\right)|  \geq  e^{\Delta n}   \right\}$. The pigeonhole principle gives
\begin{equation}
 \left\{  Z^{[x]}_n \left( \beta,  I_{n}  \right) \geq \frac{\beta n}{2} e^{\Delta n} \right\} \subset \bigcup_{k=1}^{ {\beta n}/{2} }   E^{[x]}_{n,k}  \ . 
\end{equation} 
By using the union bound, it is sufficient to show that
\begin{equation}\label{eq-poly-stp1}
  \sup_{x \in \mathbb{R}} \sup_{1 \leq k \leq \beta n/2} \P \left(  E^{[x]}_{n,k} \right) \leq e^{-\Theta(n^2)} . 
\end{equation}
\smallskip
 
\noindent
    \underline{\textit{Step 2.}} For any $1 \leq k \leq \beta n/2$,  we have the following  upper bound of $ Z^{[x]}_n \left( \beta,  I_{n,k}  \right) $:
\begin{align}
  Z^{[x]}_n \left( \beta,  I_{n,k}  \right)  & \leq  e^{-\Phi(\beta) n }\sum_{|u|=n} \ind{ \beta n-(x+S_u) \in [k-1,k]}  [\beta n-(x+S_{u})]  e^{-\beta [\beta n - (x+S_{u} )] }  \\
  & \leq  e^{-\Phi(\beta) n } \, e^{\beta} k e^{-\beta k}  \L_{n}^{[x]}(I_{n,k}). 
\end{align} 
Similarly, we see that for any $\ell\ge1$,
\begin{align}
  |Z^{[x]}_n \left( \beta,  J_{n}  \right) |  &\geq   e^{-\Phi(\beta) n } \sum_{|u|=n} \ind{ (x+S_u) -\beta n \geq \ell}  (x+S_{u}- \beta n) e^{\beta (x+S_{u}- \beta n)} \\
  &\geq   e^{-\Phi(\beta) n } \, \ell e^{\beta \ell}     \L_{n}^{[x]}( \beta n + \ell, \infty) . 
\end{align}
Therefore, for all  $1 \leq k \leq \beta n /2$  and  $\ell \geq 1$, we have 
\begin{equation}
  \label{eq:E-n-k-E-n-k-l}
 E^{[x]}_{n,k} \subset   E^{[x]}_{n,k,\ell}  \coloneqq   \left\{ e^{\beta} k e^{-\beta k}  \L_{n}^{[x]}(I_{n,k})  -  \frac{2\ell}{\beta n} e^{\beta \ell}     \L_{n}^{[x]}( \beta n + \ell, \infty)  \geq  e^{ ( \Phi(\beta) + \Delta )n }   \right\}.
\end{equation}
In order to prove \eqref{eq-poly-stp1}, we only need to control the probability $\P( E^{[x]}_{n,k,\ell} )$ with a well-chosen $\ell$.

\smallskip 
\noindent
    \underline{\textit{Step 3.}} 
 We choose $\epsilon$ sufficiently small and set $\Delta' \coloneqq \frac{1}{2}\Delta-\epsilon$, $\delta\coloneqq 2 \sqrt{\epsilon}$ such that
\begin{equation}\label{choice-epsilon-delta}
  0<\epsilon < \frac{1}{4}  \Delta \ , \  \delta < \frac{\beta}{2} \ , \ \beta\delta>\epsilon \ , \     3 \beta  \delta \leq  \Delta'  \ , \          \frac{3\delta}{  \beta} \leq  \frac{\Delta'}{ \ln 2} .
\end{equation} 
For fixed $1 \le k \le \frac{\beta n}{2}$, choose
\begin{equation}\label{eq-low-choice-ell}
 \ell\coloneqq\max\{1,\lceil\delta n-k\rceil\},\qquad h\coloneqq k+\ell. 
\end{equation}
 For all sufficiently large $n$, this choice satisfies
\begin{equation}\label{eq-low-choice-ell-bounds}
 1\leq\ell\leq\delta n,\qquad h\geq\delta n.
\end{equation}
It follows from the definition that when  $E^{[x]}_{n,k,\ell}$ occurs,    for all sufficiently large $n$, we have 
\begin{equation}\label{eq-low-level-set-range}
r_{n,k}\coloneqq\beta k+\Phi(\beta)n+\frac{ \Delta}{2}n\leq \ln \L_n^{[x]}(I_{n,k})\leq n\ln2.
\end{equation}
If $r_{n,k}>n\ln2$, the event $E^{[x]}_{n,k,\ell}$ is empty. Otherwise, define
\[
 N_{n,k}\coloneqq \max\left\{1,\left\lceil\frac{2}{\epsilon}\Big(\ln2-\frac{r_{n,k}}{n}\Big)\right\rceil\right\},\qquad
 \mathcal A_{\epsilon,n,k}\coloneqq\left\{\frac{r_{n,k}}{n}+\frac{j\epsilon}{2}:0\leq j<N_{n,k}\right\}.
\]
 As the intervals $[an,(a+\epsilon/2)n]$, $a\in\mathcal A_{\epsilon,n,k}$, cover $[r_{n,k},n\ln2]$,  applying the union bound yields
\begin{equation}
  \label{eq-adding-constraint-0}
 \P(  E^{[x]}_{n,k,\ell}  ) \leq \sum_{a \in \mathcal{A}_{\epsilon,n,k}} \P( E^{[x]}_{n,k,\ell} , \,   \L_{n}^{[x]}( I_{n,k} ) \in [e^{ a n }, e^{(a+\epsilon/2) n}]  ). 
\end{equation} 

\smallskip 
\noindent
 \underline{\textit{Step 4.}} In this step, we will combine \eqref{eq-adding-constraint-0} with Lemmas~\ref{lem-LDP-level-set-below-the-line} and~\ref{lem-LDP-level-set-above-the-line} to show that
\begin{equation}
  \P(  E^{[x]}_{n,k,\ell}  ) \leq  \sum_{a \in \mathcal{A}_{\epsilon,n,k}}  \P\left( A^{[x]}_{n,\beta n- k, a n- \epsilon n} \cap  B^{[x]}_{n,\beta n + \ell,  (a + 2\epsilon) n -  \beta h  }  \right)  +  2C  | \mathcal{A}_{\epsilon,n,k}| e^{-c n^2}. \label{eq:union-bound-E-nkl}
\end{equation}
 
To this end, note that on the event $E^{[x]}_{n,k,\ell} \cap \{\L_{n}^{[x]}( I_{n,k})\in[e^{an}, e^{(a+\epsilon/2) n}] \} $, we have
\begin{equation}\label{eq-constraint-1}
  \L_{n}^{[x]}( \beta n + \ell, \infty)   \leq \frac{\beta n}{2}e^{\beta}  \frac{k}{\ell} e^{-\beta  h}   e^{(a+\epsilon/2)n} \leq e^{-\beta h}   e^{(a+\epsilon)n} . 
\end{equation} 
 for  sufficiently large $n$. This imposes an additional constraint on $\L_{n}^{[x]}(\beta n + \ell, \infty)$, which will turn out to be negligible in probability.
Indeed,  for every $a\in\mathcal A_{\epsilon,n,k}$, the definition of the grid and~\eqref{choice-epsilon-delta} give
\begin{equation}\label{eq-low-LDP-parameter-ranges}
 \epsilon n\leq an,
 \qquad
 0\leq (a+\epsilon)n-\beta h\leq n\ln2. 
\end{equation}
The first inequality follows from $an\geq r_{n,k}\geq \Delta n/2>\epsilon n$. For the second pair, using $\ell\leq\delta n$ and $h\geq\delta n$, we obtain
  $ (a+\epsilon)n-\beta h
  \geq n[\Phi(\beta)+\frac{\Delta}{2}+\epsilon-\beta\delta]>0,  $ 
  and  $
 (a+\epsilon)n-\beta h
 \leq n (\ln2+\epsilon-\beta\delta) \leq n\ln2.  $
 Thus all the size parameters used below lie in the ranges required by Lemmas~\ref{lem-LDP-level-set-below-the-line} and~\ref{lem-LDP-level-set-above-the-line}: 

 According to Lemma~\ref{lem-LDP-level-set-below-the-line}  there exists constant $C>0$ depending only on $\epsilon$ such that 
 \begin{equation}
  \label{eq-adding-constraint-1}
\sup_{x \in \mathbb{R}} \P \left(  E^{[x]}_{n,k,\ell},  \L_{n}^{[x]}( I_{n,k} )  \geq e^{ a n },  B^{[x]}_{n,\beta n- k, a n- \epsilon n} \right) \leq C e^{-n^2} .
 \end{equation}
On the other hand, combining \eqref{eq-constraint-1} with Lemma \ref{lem-LDP-level-set-above-the-line} implies that there are constants $c$, $C$ (depending only on $\epsilon$) such that 
 \begin{align}
  & \sup_{x \in \mathbb{R}} \P \left( E^{[x]}_{n,k,\ell},  \L_{n}^{[x]}( I_{n,k} )  \in  [e^{ a n }, e^{(a+\epsilon/2) n}],     A^{[x]}_{n,\beta n + \ell,  (a + 2\epsilon) n -  \beta h } \right)  \\
  & \quad \leq   \sup_{x \in \mathbb{R}}  \P \left(  \L_{n}^{[x]}( \beta n + \ell, \infty) \leq e^{ (a+\epsilon)n - \beta h },  A^{[x]}_{n,\beta n + \ell,  (a + 2\epsilon) n -  \beta h } \right) \leq C e^{- c n^2} . \label{eq-adding-constraint-2}
 \end{align}
Plugging \eqref{eq-adding-constraint-1} and \eqref{eq-adding-constraint-2} back into \eqref{eq-adding-constraint-0},  the desired result \eqref{eq:union-bound-E-nkl} follows.

 \smallskip 
\noindent
\underline{\textit{Step 5.}} 
Fix $1\leq k\leq\beta n/2$ and let $\ell$ and $h$ be chosen as in~\eqref{eq-low-choice-ell}--\eqref{eq-low-choice-ell-bounds}.
We claim that for all $a\in\mathcal{A}_{\epsilon,n,k}$,
\begin{equation}\label{Above-Below-Contradiction}
 A^{[x]}_{n,\beta n-k,an-\epsilon n}
 \cap
 B^{[x]}_{n,\beta n+\ell,(a+2\epsilon)n-\beta h}
 =\emptyset.
\end{equation}
 Substituting~\eqref{Above-Below-Contradiction} into~\eqref{eq:union-bound-E-nkl} gives 
\[ 
 \P(E^{[x]}_{n,k})
 \leq\P(E^{[x]}_{n,k,\ell})
 \leq2C|\mathcal{A}_{\epsilon,n,k}|e^{-cn^2}   \le C'e^{-c n^2}. 
\]
 with constant $C'$ depending only on $\epsilon$. This implies \eqref{eq-poly-stp1} and completes the proof. 

 It remains to prove \eqref{Above-Below-Contradiction}.
To this end, we apply Lemma~\ref{lem-level-set-event-comparison} with
\[
 L=\beta n-k,\quad  h=k+\ell,\quad  q=\beta-\delta,\quad  M_{\mathrm{A}}=(a-\epsilon)n,\quad  M_{\mathrm{B}}=(a+2\epsilon)n-\beta h.
\] 
By~\eqref{choice-epsilon-delta} since $0<\delta<\beta/2$,  we have $\beta/2 < q < \beta < \beta_{c}$. 
For $a\in\mathcal{A}_{\epsilon,n,k}$, the definition of
$\mathcal{A}_{\epsilon,n,k}$  gives  
\begin{equation}\label{eq-low-Ma-bounds}
\beta k+\Phi(\beta)n+\Delta' n
 \leq M_{\mathrm{A}}
 \leq n\ln2-\epsilon n.
\end{equation}
In particular, $0<M_{\mathrm{A}}\leq n\ln2$.
It remains to verify the three
inequalities in Lemma~\ref{lem-level-set-event-comparison}.

\begin{itemize}
\item We first verify~\eqref{eq-level-set-comparison-height}. From the definitions of $M_{\mathrm{A}}$, $M_{\mathrm{B}}$, and $q$, we have
\begin{align*}
 M_{\mathrm{A}}-qh-M_{\mathrm{B}}= (a-\epsilon)n-(\beta-\delta)h -\bigl[(a+2\epsilon)n-\beta h\bigr] =\delta h-3\epsilon n.
\end{align*}
 By~\eqref{eq-low-choice-ell-bounds}, $h\geq\delta n$.  Recalling that
$\delta=2\sqrt{\epsilon}$, we obtain
$
 M_{\mathrm{A}}-qh-M_{\mathrm{B}}
 \geq\delta^2n-3\epsilon n 
 =\epsilon n>0$.
Therefore $M_{\mathrm{B}}\leq M_{\mathrm{A}}-qh$, as required.

\item Secondly, we verify    \eqref{eq-level-set-comparison-left-endpoint}. Since
$k\leq\beta n/2$, $\ell\leq\delta n$, and $\delta<\beta/2$, we have
$
 h=k+\ell
 \leq\beta n$.
Moreover, $q=\beta-\delta\geq\beta/2$. Using the identity
$
 \frac{1}{q}=\frac{1}{\beta}+\frac{\delta}{\beta q}$,
we obtain
\begin{align} 
 \frac{h}{2q}=\frac{k}{2\beta}+\frac{\ell}{2\beta}  +\frac{\delta h}{2\beta q} \leq\frac{k}{2\beta}+\frac{\delta}{2\beta}n   +\frac{\delta}{\beta}n  \leq\frac{k}{2\beta}+\frac{\Delta'}{\ln2}n. 
\label{eq-low-left-endpoint-bound}
\end{align}
Here we have used the last condition in
\eqref{choice-epsilon-delta}, namely
$\frac{ 3\delta}{2\beta}\leq \frac{\Delta'}{\ln2}$.  
Using the lower bound in
\eqref{eq-low-Ma-bounds} and the fact
$\Phi(\beta)=\ln2-\beta^2/2$, we get
\begin{equation}
 \frac{M_{\mathrm{A}}}{\ln2}
 \geq \frac{\beta}{\ln2}k
   +\Big(1-\frac{\beta^2}{2\ln2}\Big)n
   +\frac{\Delta'}{\ln2}n .
\end{equation}
Thus, it suffices to verify 
\[
 g(k)\coloneqq
 \Big( 1-\frac{\beta^2}{2\ln2}\Big)n
 \mathbin{+}\Big(\frac{\beta}{\ln2}-\frac{1}{2\beta}\Big)k \ge 0
 \qquad\text{for all } 0\leq k\leq\frac{\beta n}{2}.
\]
Since $g$ is linear in $k$,   
$  g(0)
 = (1-\frac{\beta^2}{2\ln2})n >0$ and  $g(\frac{\beta n}{2})
 =\frac{3n}{4}>0$, the required condition follows.  

\item Finally, we verify~\eqref{eq-level-set-comparison-right-endpoint}.  
By the lower bound in
\eqref{eq-low-Ma-bounds}, the definition $h=k+\ell$,  we can bound the left-hand side in \eqref{eq-level-set-comparison-right-endpoint} from above by 
\begin{align*}
[2\ln2-q^2]n^2 -2\bigl[\beta k+\Phi(\beta)n+\Delta' n\bigr]n +qn(k+\ell)
&=[\beta^2-q^2]n^2-(2\beta-q)kn \\
&\quad +q\ell n-2\Delta' n^2.
\end{align*}
Since $2\beta-q=\beta+\delta>0$ and $k\geq1$, the second term in the last line is non-positive. Furthermore, $ \beta^2-q^2
 =\delta(\beta+q)\leq2\beta\delta $ and $ 
 q\ell n\leq\beta\delta n^2$. 
It follows that
\begin{align*}
 [2\ln2-q^2]n^2-2M_{\mathrm{A}}n+qnh   \le (3\beta\delta-2\Delta')n^2 \leq-\Delta' n^2<0,
\end{align*}
where the last inequality is exactly the condition $3\beta\delta\leq \Delta'$
in~\eqref{choice-epsilon-delta}. 
\end{itemize}
All the three conditions of Lemma~\ref{lem-level-set-event-comparison} are therefore satisfied. This completes the proof.  
\end{proof}


\begin{proof}[Proof of Lemma~\ref{lem-Polynomial-decay-high-initial-position}]
Since $W_n(\beta)\geq0$, the event in~\eqref{eq-Polynomial-decay-high} is decreasing in $\theta$. It therefore suffices to prove the result for sufficiently small $\theta$. 

\smallskip
\noindent
\underline{\textit{Step 1.}} Fix $\theta\in(0,\beta/2)$ and set $ \tilde\beta\coloneqq\beta-\theta\in(\beta/2,\beta)$.
Choose $\delta=\theta/2$ then $\epsilon\in(0,1)$ sufficiently small,
\begin{equation}\label{eq-choosing-eps-del-2}
 \sqrt\epsilon<\beta,\quad
 \delta\geq 4 \epsilon^{1/4},\quad
 \beta\delta>6\epsilon,\quad
 3\epsilon+\frac{\sqrt{\epsilon}\,  \ln2}{\beta(\beta-\sqrt\epsilon)}  
 <\Phi(\beta)\wedge\frac{\ln2}{2}.
\end{equation} 
Define
\[
 v\coloneqq \tilde\beta+\delta \in(0,\beta) \ , \  \lambda\coloneqq\frac{v^2}{2\beta}\in(0,\beta/2) \ ; \ 
 \text{ and } 
 E_n\coloneqq \Big\{\L_n(vn,\infty)\geq
 e^{[\Phi(v)-\epsilon] n} \Big\}.
\] 

We first show that $E_n$ occurs with high  probability.
Apply Lemma~\ref{lem-LDP-level-set-above-the-line} with error parameter $\epsilon/2$, $L=(\tilde{\beta}+\delta)n$, $M=\Phi(\tilde{\beta}+\delta) n - \epsilon n$ and $x=0$. Since  the root vertex $\rho$ satisfies  $\varPhi \left(n-|\rho|, L - S_{\rho}\right) = \Phi(\tilde{\beta}+\delta)n$, the event $A^{[0]}_{ n, L,M + \epsilon n/2}$ occurs surely. Hence there exist constants $c, C$ depending only on $ \beta,\theta$ such that 
  \begin{equation}
    \label{eq: E-n-good}
  \P( E_{n}^{c} ) \leq C e^{- c n^2 } \, ,  \text{ for all }  n\geq 1.
  \end{equation}

\smallskip
\noindent
\underline{\textit{Step 2.}}
We next show that  on 
$\{Z_n(\beta)\geq\theta nW_n(\beta)\}\cap E_n$ it holds  
\begin{equation}
  \sum_{|u|=n}\ind{0\leq\tilde\beta n-S_u<\lambda n}
 (\tilde\beta n-S_u)e^{\beta(S_u-\tilde\beta n)} \geq\frac{\delta n}{2}e^{\beta\delta n}
 \L_n(vn,\infty).\label{eq-bnd-0-lambda-n}
\end{equation}

Observe that the event $\{
   Z_{n}(\beta) \ge  \theta n W_{n}(\beta) \}  $ occurs if and only if 
\begin{equation}\label{eq-bound-ab-bl-0} 
   \sum_{|u|=n}  \ind{\tilde{\beta} n - S_u  \ge 0 }   (\tilde{\beta} n - S_u)  e^{\beta(S_u- \tilde{\beta} n   )}   \geq \sum_{|u|=n} \ind{S_u- \tilde{\beta} n \geq 0}(S_u-\tilde{\beta} n ) e^{\beta(S_u- \tilde{\beta} n   )}  .
\end{equation} 
On $E_n$, the right-hand side of \eqref{eq-bound-ab-bl-0} is bounded from below by
\begin{equation}\label{eq-bound-ab-1}
 \sum_{|u|=n}\ind{S_u\geq vn}(S_u-\tilde\beta n)
 e^{\beta(S_u-\tilde\beta n)}
 \geq\delta n \, e^{\beta\delta n} \,  \L_n(vn,\infty).
\end{equation}
Moreover, since $y\mapsto ye^{-\beta y}$ is decreasing for large $y$ and $\beta\lambda=v^2/2$, the contribution to the left-hand side of \eqref{eq-bound-ab-bl-0} from particles satisfying $ \tilde\beta n-S_u\geq\lambda n$ is bounded from above by
\begin{equation}\label{eq-bound-bl-2}
 \sum_{|u|=n}\ind{\tilde\beta n-S_u\geq\lambda n}
 (\tilde\beta n-S_u)e^{\beta(S_u-\tilde\beta n)}
 \leq\beta n e^{\Phi(v)n} .
\end{equation}  Since $\beta\delta>\epsilon$, the right-hand side of~\eqref{eq-bound-bl-2} is, for all sufficiently large $n$, at most one half of the right-hand side of~\eqref{eq-bound-ab-1}. Combining this with~\eqref{eq-bound-ab-bl-0}, we obtain \eqref{eq-bnd-0-lambda-n}.

\smallskip
\noindent
\underline{\textit{Step 3.}}
For $1\leq k\leq\lambda n$, define
\[
 \tilde I_{n,k}\coloneqq(\tilde\beta n-k,\tilde\beta n-k+1].
\]
In this step we prove that  $\P\big(\{Z_n(\beta)\geq\theta nW_n(\beta)\}\cap E_n\big) $ is bounded  above by 
\begin{equation} 
 \sum_{k=1}^{\lambda n}\sum_{b\in\mathcal B}
 \P \bigl(  
 \L_n(vn,\infty)\leq e^{bn},\ 
 \L_n(\tilde I_{n,k})\geq
 e^{bn+\beta(k+\delta n)-2\epsilon n}\bigr).
 \label{eq-high-union-bound}
\end{equation}
where  
\[
 \mathcal B\coloneqq
 \{\Phi(v)+j\epsilon:0\leq j<J\}\cup\{\ln2\} \  , \ \text{ and  } \  J\coloneqq\left\lceil\frac{\ln2-\Phi(v)}{\epsilon}\right\rceil .
\]

Indeed, on $E_n$, there exists $b\in\mathcal B$ such that
$
 \L_n(vn,\infty)\in[e^{(b-\epsilon)n},e^{bn}]$. Applying the pigeonhole principle to \eqref{eq-bnd-0-lambda-n} yields that   there exists $1\leq k\leq\lambda n$ satisfying
\begin{align}
 \frac{\delta}{2\lambda}e^{\beta\delta n}\L_n(vn,\infty)\leq\sum_{|u|=n}\ind{S_u\in\tilde I_{n,k}} (\tilde\beta n-S_u)e^{\beta(S_u-\tilde\beta n)} \leq e^\beta k e^{-\beta k}\L_n(\tilde I_{n,k}).
 \label{eq-constraint-01}
\end{align} Consequently, 
$
 \L_n(\tilde I_{n,k})\geq
 e^{bn+\beta(k+\delta n)-2\epsilon n}
$, since 
 uniformly in $1\leq k\leq\lambda n$, the polynomial prefactor in~\eqref{eq-constraint-01} can be absorbed into $e^{-\epsilon n}$ for   large $n$.  This proves \eqref{eq-high-union-bound}.

\smallskip
\noindent
\underline{\textit{Step 4.}} Combining \eqref{eq: E-n-good} with 
\eqref{eq-high-union-bound} 
and using that \(|\mathcal B|=O_{\epsilon}(1)\), it suffices to show that there exist constants $0<c,C<\infty$ such that for any   $1 \le k \le \lambda n$ and any $b \in \mathcal{B}$, 
 \begin{equation}
  \label{eq-high-union-bound-k-b}
  \P \bigl(  
 \L_n(vn,\infty)\leq e^{bn},\ 
 \L_n(\tilde I_{n,k})\geq
 e^{bn+\beta(k+\delta n)-2\epsilon n}\bigr) \le C e^{-c n^2}
 \end{equation}

 Define
\begin{equation}\label{eq-high-comparison-parameters}
 h\coloneqq k+\delta n,\qquad
 M_{\mathrm{A}}\coloneqq bn+\beta h-3\epsilon n,\qquad
 M_{\mathrm{B}}\coloneqq bn+\epsilon n.
\end{equation}
Since $b\geq\Phi(v)>\Phi(\beta)>3\epsilon$, we have $M_{\mathrm{A}}+\epsilon n=bn+\beta h-2\epsilon n\geq\epsilon n$. Moreover, $0\leq bn\leq n\ln2$ by the definition of $\mathcal B$. Lemmas~\ref{lem-LDP-level-set-below-the-line} and~\ref{lem-LDP-level-set-above-the-line} now give
\begin{equation}
  \label{eq-divide-ac-A-B}
  \P \Bigl(   
 \L_n(\tilde I_{n,k})\geq e^{M_{\mathrm{A}}+\epsilon n},
 B^{[0]}_{n,\tilde\beta n-k,M_{\mathrm{A}}}\Bigr)
  + \P \Bigl(  
 \L_n(vn,\infty)\leq e^{bn},
 A^{[0]}_{n,vn,M_{\mathrm{B}}}
  \Bigr) \leq Ce^{-cn^2}. 
\end{equation} 
We further claim that  
\begin{equation}
\label{eq-A-B-not-intersect}
   A^{[0]}_{n,\tilde\beta n-k,M_{\mathrm{A}}}
   \cap B^{[0]}_{n,vn,M_{\mathrm{B}}}=\emptyset.
\end{equation}
It then follows from \eqref{eq-divide-ac-A-B}, \eqref{eq-A-B-not-intersect}, 
and the definitions in \eqref{eq-high-comparison-parameters} that 
\eqref{eq-high-union-bound-k-b} holds.

\smallskip
\noindent
\underline{\textit{Step 5.}}
We finish the proof by showing  \eqref{eq-A-B-not-intersect}. We apply Lemma~\ref{lem-level-set-event-comparison} with $
 h, M_{\mathrm{A}},M_{\mathrm{B}}$ as in~\eqref{eq-high-comparison-parameters} and 
\[
 L=\tilde\beta n-k,\qquad 
 q=\beta- \sqrt{\epsilon}
\]
Notice that $L+h=vn$ and $q\in(0,\beta_c)$. Moreover,
$0<M_{\mathrm{A}}\leq n\ln2$ for every nonempty term retained above. We verify the three conditions of Lemma~\ref{lem-level-set-event-comparison}.

\begin{itemize}
\item Since $\beta-q=\sqrt{\epsilon}$ and $h\geq\delta n$, 
\[ M_{\mathrm{A}}-qh-M_{\mathrm{B}}
  =(\beta-q)h-4\epsilon n \geq(\delta\sqrt{\epsilon}-4\epsilon)n\geq0,\] 
where the last inequality follows from $\delta\geq4\epsilon^{1/4}$ and $\epsilon<1$. This proves~\eqref{eq-level-set-comparison-height}.

\item We next prove~\eqref{eq-level-set-comparison-left-endpoint}. Since $M_{\mathrm{A}} \ge \Phi(v) n + \beta h - 3 \epsilon n$, and $ \frac{\ln2}{2q}
 =\frac{\ln2}{2\beta}
 +\frac{ \sqrt{\epsilon} \ln2}{2\beta q}$,    it is enough to show 
\begin{equation}\label{eq-high-left-sufficient}
 \Phi(v)n+\Big(\beta-\frac{\ln2}{2\beta}\Big)h
 \geq3\epsilon n+\frac{  \sqrt{\epsilon} \ln2}{2\beta q}h.
\end{equation}  
Note that $h\leq(\lambda+\delta)n\leq\beta n \le 2 n$. 
If $\beta-\frac{\ln2}{2\beta}\geq0$, then by~\eqref{eq-choosing-eps-del-2},
\[
 3\epsilon n+\frac{ \sqrt{\epsilon}\ln2}{2\beta q}h
 \leq \Bigl[  3\epsilon+\frac{  \sqrt{\epsilon} \ln2}{ \beta q}\Bigr]  n
 \leq\Phi(\beta)n\leq\Phi(v)n.
\]
If $\beta-\frac{\ln2}{2\beta}<0$, the left-hand side of~\eqref{eq-high-left-sufficient} is minimized at $h=(\lambda+\delta)n$. Using $\beta\lambda=v^2/2$, $v<\beta$,  $\delta = \theta/2 < \beta/2$, and \eqref{eq-choosing-eps-del-2}, we obtain
\begin{align*}
 \Phi(v)n+ \Bigl(    \beta-\frac{\ln2}{2\beta}\Bigr) (\lambda+\delta)n \geq n\ln2\Big[1-\frac{v^2}{4\beta^2}-\frac{\delta}{2\beta}\Big] \geq  \frac{n \ln2}{2}  \,  \ge 3\epsilon n+\frac{ \sqrt{\epsilon} \ln2}{2\beta q}h.
\end{align*}
Thus~\eqref{eq-high-left-sufficient} holds in both cases, and hence
$M_{\mathrm{A}}/\ln2\geq h/(2q)$.

\item Finally,
since $b\geq\Phi(v)\geq\Phi(\beta)$ and $h\geq\delta n$, we can upper bound the left-hand side in~\eqref{eq-level-set-comparison-right-endpoint} using $(2\beta-q)\delta>\beta\delta>6\epsilon$, $\beta<2$, and $\delta\geq4\epsilon^{1/4}$:
\begin{align*}
[\beta^2-q^2]n^2-(2\beta-q)hn+6\epsilon n^2\leq(\beta^2-q^2)n^2 \leq2\beta\sqrt\epsilon\,n^2
 \leq\frac{\delta^2n^2}{4}
 \leq\frac{h^2}{4}.
\end{align*}
\end{itemize}

This proves~\eqref{eq-level-set-comparison-right-endpoint} and completes the proof.
\end{proof}

\begin{proof}[Proof of Lemma \ref{lem-level-set-event-comparison}]
Suppose that $A^{[x]}_{n,L,M_{\mathrm{A}}}$ occurs. Then there exists some $u\in\mathbb{T}_{\leq n}$ such that, with
$
 t\coloneqq n-|u|$ and $
 \Lambda \coloneqq L-(x+S_u)$, 
we have $\varPhi(t,\Lambda)\geq M_{\mathrm{A}}$. We claim that  
\begin{align*}
 \varPhi(t,\Lambda+h)
 =\varPhi(t,\Lambda)-h\frac{\Lambda+h/2}{t} \geq M_{\mathrm{A}} - q h . 
\end{align*}
Indeed, once this claim is proved, \eqref{eq-level-set-comparison-height} yields $\varPhi(t,\Lambda+h) \ge M_{\mathrm{B}} $.
Consequently, the same vertex $u$ triggers the event $A^{[x]}_{n,L+h,M_{\mathrm{B}}}$. This gives the desired relation~\eqref{eq-level-set-comparison-disjoint}.

It remains to prove the claim. It suffices to verify
$
 \Lambda+{h}/{2}\leq qt$.
Since $\varPhi(t,\Lambda)\geq M_{\mathrm{A}}$, necessarily $|u|<n$ and
$
 \frac{M_{\mathrm{A}}}{\ln2}\leq t\leq n$.
Condition~\eqref{eq-level-set-comparison-left-endpoint} therefore gives $qt-{h}/{2} \ge 0$. So it is enough to show 
$\Lambda^2 \le (q t- h/2)^2$. From $\varPhi(t,\Lambda)\geq M_{\mathrm{A}}$ we deduce $\Lambda^2 \le 2 t (t \ln 2- M_{\mathrm{A}}  )$. Hence it suffices to show that
\begin{equation}\label{eq-level-set-comparison-positive}
 2t ( t\ln2-M_{\mathrm{A}} )\leq(qt-h/2)^2  .
\end{equation}

Set $t_0\coloneqq M_{\mathrm{A}}/\ln2$ and consider the function 
\[
 F(s)\coloneqq 2s(s\ln2-M_{\mathrm{A}})-(qs-h/2)^2.
\]
Since $q<\beta_c=\sqrt{2\ln2}$, the function $F$ is convex. Note that $F(t_0)=-(qt_0-h/2)^2\leq0$, while condition~\eqref{eq-level-set-comparison-right-endpoint} is precisely $F(n)\leq0$. Hence $F(s)\leq0$ for every $s\in[t_0,n]$. This completes the proof. 
\end{proof}

\subsection{Proof of Theorem~\ref{thm-Z-tail}: derivation of the super-exponential decay}
\label{sec-pf-thm1}
In this section, we derive the desired super-exponential decay rate by analyzing the growth of high-order moments of $(Z_{\infty}(\beta))_+$. Recall from \eqref{eq-tail-of-D} that the left tail of  $Z_{\infty}(\beta)$ exhibits polynomial decay. We thus restrict ourselves to the positive part $(Z_{\infty}(\beta))_+$. 

\begin{lemma}[{\cite[Proposition 2.7.1]{Ver18}}]\label{lem-sub-exp}
Let $X\ge 0$ be a random variable. Then the following properties are equivalent, with $K_1$ and $K_2$ differing only by universal multiplicative constants:
\begin{enumerate}[label=(\arabic*)]
  \item For some $K_1>0$, the right tail of $X$ satisfies $\P(X \geq t) \leq 2\exp \left(-t/K_1\right)$ for all $t\geq0$.
  \item For some $K_2>0$, the moments of $X$ satisfy $ \E[ X^k ]  \leq K_2^k \,  k^k $ for all $ k \geq 1$.
\end{enumerate} 
\end{lemma}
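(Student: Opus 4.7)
\medskip

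\noindent\emph{Proof plan for Lemma~\ref{lem-sub-exp}.} This is a classical equivalence between a sub-exponential tail and a $k^k$ growth rate for the moments, and I would handle the two implications separately by elementary means.

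For the implication $(1) \Rightarrow (2)$, the plan is to expand the moment via the layer-cake representation
\begin{equation}
\E[X^k] = \int_0^\infty k t^{k-1} \, \P(X \geq t) \, \dif t,
\end{equation}
substitute the tail bound $\P(X \geq t) \leq e^{-t/K_1}$, and compute the resulting Gamma integral to obtain $\E[X^k] \leq K_1^k \, k!$. The last ingredient is the elementary inequality $k! \leq k^k$, which yields $(2)$ with $K_2 = K_1$ (and in fact gives a slightly stronger $k!$ bound).

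For the harder implication $(2) \Rightarrow (1)$, the idea is to apply Markov's inequality at each integer moment,
\begin{equation}
\P(X \geq t) \leq t^{-k} \, \E[X^k] \leq \left( \frac{K_2 \, k}{t} \right)^k,
\end{equation}
and then to optimize the free parameter $k$ in terms of $t$. The natural choice is $k \approx t/(eK_2)$, which makes the base of the exponential equal to $1/e$ and yields a bound of order $e^{-t/(eK_2)}$. The only mildly delicate point is that $k$ must be a positive integer, so one takes $k = \lfloor t/(eK_2) \rfloor$, verifies that this is $\geq 1$ once $t \geq eK_2$, and absorbs the rounding error into a prefactor that is harmless; for $t \leq eK_2$ the claimed tail bound is trivial after enlarging the constant, so the statement holds with some $K_1$ proportional to $K_2$.

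Neither step is really an obstacle: the whole argument is a short exercise in integration and optimization, and the only point that needs a little care is the choice of $k$ in the Markov step and the handling of small $t$. I would present $(1) \Rightarrow (2)$ in one or two lines using the layer-cake formula and $k! \leq k^k$, and devote a slightly longer paragraph to $(2) \Rightarrow (1)$ so that the optimization and the rounding are fully justified.
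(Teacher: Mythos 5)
Your plan is correct and is exactly the standard argument: the paper gives no proof of this lemma at all, citing Vershynin's Proposition 2.7.1, whose proof is precisely your layer-cake computation for $(1)\Rightarrow(2)$ and the Markov-plus-optimization in $k$ for $(2)\Rightarrow(1)$. The only caveat is in your last step: with the lemma stated literally as $\P(X\ge t)\le e^{-t/K_1}$ with no multiplicative constant, the small-$t$ range cannot be made ``trivial by enlarging $K_1$'' (take $X\equiv 1$, which satisfies (2) but not (1) as printed); what your argument really yields is the Vershynin form $\P(X\ge t)\le C e^{-t/K_1}$, which is the correct statement and is all the paper uses, so your handling via an absorbed prefactor is the right fix rather than a gap.
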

 
\begin{proof}[Proof of Theorem~\ref{thm-Z-tail}: Upper bound]
We proceed in four steps. 
\smallskip

\noindent
\underline{\textit{Step 1.}}  
To prove the upper bound in Theorem \ref{thm-Z-tail}, we need to show that $ (Z_{\infty}(\beta))_+^{\gamma } $ has sub-exponential tails. By applying Lemma \ref{lem-sub-exp}, it is reduced to proving a uniform bound on the moments. In particular, we show that for all $k\geq 1$, $\sup_{x\in\mathbb{R}}\E[(Z_{\infty}^{[x]}(\beta))_+^{\gamma k}]\leq C_1^k k^{k}$, or equivalently,
\begin{equation}\label{eq-Z-k-moment}
  \sup_{x \in \mathbb{R}}\E[  (Z_{\infty}^{[x]}(\beta))_+^{ k} ]  \leq  C_2^k k^{k/\gamma} 
\end{equation} 
for some positive constants $C_{1},C_{2}$. This estimate and Lemma \ref{lem-sub-exp} yields Theorem~\ref{thm-Z-tail} and Remark~\ref{rmk:enhanced-thm}.

We first note that Lemma~\ref{lem-polynomial-decay} already yields a preliminary moment estimate, namely the finiteness of the k-th moment for every $k\geq 1$:
\begin{equation}\label{eq-Z-k-moment-finite}
  \sup_{x \in \mathbb{R}}\E[  (Z_{\infty}^{[x]}(\beta))_+^{ k} ]  < \infty.
\end{equation}

\noindent
\underline{\textit{Step 2.}}  
We  derive a recursive relation for $ Z^{[x]}_{\infty} (\beta)$. Notice that   
\begin{align}
  Z^{[x]}_{n+1} (\beta)
  & = \frac{e^{\beta^2/2}}{2}
  \sum_{|u|=1}
  \frac{1}{2^n}\sum_{v\in\mathbb{T}^{(u)}_{n}}
  \big[\beta n-\bigl(x+S_u-\beta+S^{(u)}_v\bigr)\big]
  e^{\beta(x+S_u-\beta+S^{(u)}_v)-\frac{\beta^2}{2}n} \\
    & = q \sum_{|u|=1} Z^{[x+S_u-\beta],(u)}_{n}(\beta) 
\end{align}
where $q\coloneqq \frac{e^{\beta^2/2}}{2} =e^{-(1-\frac{1}{\gamma})\ln 2} \in (0,1) $.  Taking the limit as $n \to \infty$  and considering the positive part yields the recursive inequality:
\begin{equation}
  (Z^{[x]}_{\infty} (\beta) )_+ \le  q \sum_{|u|=1} \left( Z^{[x+S_u-\beta],(u)}_{\infty}(\beta) \right)_+.
\end{equation} 

\smallskip
\noindent
\underline{\textit{Step 3.}} 
To prove the moment bound \eqref{eq-Z-k-moment}, we use the above recursive inequality.  Raising the above inequality to the $k$-th power and taking the expectation, we obtain by independence,
\begin{equation}
  \E  \left[ (    Z^{[x]}_{\infty} (\beta))_+^k   \right] \leq q^k\sum_{j=0}^{k} \binom{k}{j} \E \biggl[  \Bigl( Z^{[x+\xi-\beta]}_{\infty}(\beta) \Bigr)_+^j  \biggr]   \E \biggl[   \Bigl( Z^{[x+\xi-\beta]}_{\infty}(\beta) \Bigr)_+^{k-j}    \biggr] .
\end{equation} 
Normalizing by $k!$ turns the binomial sum into a convolution: 
\begin{equation}
  \frac{  1 }{k!}\E  \left[ (    Z^{[x]}_{\infty}(\beta))_+^k   \right] \leq q^k  \sum_{j=0}^{k}  \frac{  1}{j!} \E \left[  \left( Z^{[x+\xi-\beta]}_{\infty}(\beta) \right)_+^j  \right]  \frac{ 1  }{(k-j)!}\E \left[   \left( Z^{[x+\xi-\beta]}_{\infty}(\beta) \right)_+^{k-j}    \right].
\end{equation}
To simplify this expression, we define a rescaled sequence:
\begin{equation}
  t(k) \coloneqq \sup_{x\in\R} \,  \frac{1 }{k !} e^{(1-\frac{1}{\gamma})k \ln k} \E \left[  \left( Z^{[x]}_{\infty}(\beta) \right)_+ ^k  \right] \in (0,\infty)  . 
\end{equation}
where the finiteness of $t(k)$ follows from~\eqref{eq-Z-k-moment-finite}. With $q=e^{-(1-\frac{1}{\gamma})\ln 2}$, substituting this definition into the convolution inequality yields:
\begin{equation}
  t(k) \leq \sum_{j=0}^k e^{(1-\frac{1}{\gamma}) [k \ln \frac{k}{2} - j \ln j - (k-j)\ln(k-j) ] } t(j) t(k-j),
\end{equation}
where we use the convention that $0 \ln 0 =0$. The exponent can be bounded by observing that the function $x \mapsto x \ln x +(k-x) \ln (k-x)$ takes its minimum at $x=k/2$. It follows that for all $1 \leq j \leq k-1$ there holds
$
  k \ln \frac{k}{2} - j \ln j - (k-j)\ln(k-j)  \leq 0.
$
The upshot is the recursive relation for $t(k)$:
\begin{equation}\label{eq-t(k)-recursive-ineq}
  t(k) \leq  2 q^k  t (k) + \sum_{j=1}^{k-1} t(j) t(k-j).
\end{equation}

\smallskip
\noindent
\underline{\textit{Step 4.}} 
We finally derive an upper bound on $t(k)$ by induction. Define  
\begin{equation}
  A\coloneqq  \sup_{k \geq 1} \,    k^2 \, \sum_{j=1}^{k-1} \frac{1}{j^2} \frac{1}{(k-j)^2}  < \infty .
\end{equation}
We now prove that there exists a constant $C_{3}>0$ depending only on $\beta$ such that for all $k\geq 1$,
\begin{equation}\label{eq-t(k)-induction}
  t (k) \leq  \frac{1}{2Ak^2} C_{3}^k.
\end{equation}  

Let $k_0$ be the minimal integer $k\geq 1$ satisfying that $1-2q^k\geq\frac{1}{2}$. Since $t(j)$ is finite for any $j$, we can choose a constant $C_{3}$ large enough such that the inequality~\eqref{eq-t(k)-induction} holds for all $j\le k_0$. This is our base case of the induction. Assume now that~\eqref{eq-t(k)-induction} holds for all integers up to $k-1$ where $k>k_0$.  From the recursive relation \eqref{eq-t(k)-recursive-ineq} and the definition of $A$, it follows that: 
\begin{equation}
  t(k)\leq\frac{1}{(1-2q^k)}\frac{1}{4A^2k^2}k^2\sum_{j=1}^{k-1}\frac{1}{j^2}\frac{1}{(k-j)^2}C_{3}^k\leq\frac{1}{2Ak^2}C_{3}^k.
\end{equation} 
This completes the induction. Substituting the definition of $t(k)$ into~\eqref{eq-t(k)-induction} yields:   
\begin{equation}
  \E \left[  \left( Z_{\infty}^{[x]}(\beta) \right)_+^k\right]\leq\frac{1}{2Ak^2} C_{3}^k \,   k! e^{-(1-\frac{1}{\gamma})k \ln k} \le  \frac{1}{2A} C_{3}^k \, k^{\frac{1}{\gamma} k }  .
\end{equation} 
This is the desired moment condition~\eqref{eq-Z-k-moment} and completes the proof.
\end{proof}

 \subsection{Proof of Theorem~\ref{thm-Z/W-tail}}\label{sec-pf-thm2}
For simplicity, in the following  we write 
$R_{\beta}\coloneqq \frac{Z_{\infty}(\beta)}{W_{\infty}(\beta)} $.

 \begin{proof}[Proof of Theorem~\ref{thm-Z/W-tail}]
  Observe that 
\[
 Z_{\infty}^{[x]}(\beta)
 =
 e^{\beta x}\bigl[ Z _{\infty}(\beta)-xW_{\infty}(\beta)\bigr]
 =
 e^{\beta x}W_{\infty}(\beta) \bigl[  R_{\beta}-x\bigr] \ , \ \forall x \in \mathbb{R} .
\] 
Therefore, given any $x \ge 1$, we have 
\[
 \P(R_{\beta}\ge 2 x)
 \le
 \P\bigl(Z_{\infty}^{[x]}(\beta)\ge x \bigr)
 +
 \P\bigl(W_{\infty}(\beta)<e^{-\beta x}\bigr).
\]

The first probability on the right-hand side is bounded  by $\exp(-\Theta(x^{\gamma}))$ according to \eqref{eq:enhanced-thm}. For the second
term, applying the small deviation probability for $W_{\infty}(\beta)$, we have  
\begin{equation}
  \label{eq-small-ball-W}
   \P\bigl(W_{\infty}(\beta)<e^{-\beta x}\bigr)
   \le \exp(- \Theta  (x^{2 }) ).
\end{equation}
Therefore $\P(R_{\beta}\ge 2 x) \lesssim \exp (- \Theta(x^{\gamma \wedge 2}) )$, which proves  Theorem~\ref{thm-Z/W-tail}.
 
Here we provide a simple proof of \eqref{eq-small-ball-W} using the argument in \cite[Lemma A.3]{BS09}.  Using the  relation $
W_{\infty}(\beta) = \frac{1}{2} \sum_{|u|=1} e^{\beta \xi_{u}- \frac{1}{2}\beta^2} W_{\infty}^{(u)}(\beta)$
and   \(\frac{x+y}{2}\ge \sqrt{xy}\), we get
\[
\E[W_{\infty}(\beta)^{-q}] \le \bigl( \E [ e^{-q \beta \xi/2 +  \beta^2q/2 }  ] \bigr) ^2 \bigl( \E[W_{\infty}(\beta)^{-\frac{q}{2}}] \bigr)^2  = e^{ \frac{\beta^2}{2}q+\frac{\beta^2}{4}q^2 } \bigl( \E[W_{\infty}(\beta)^{-\frac{q}{2}}] \bigr)^2. 
\] 
Starting from  the fact that  \( \E[W_{\infty}(\beta)^{-1} ]<\infty\), iteration yields a constant \(C <\infty\) such that
\[
 \E[W_{\infty}(\beta)^{-q}]   \le \exp\{C q^2\} \ , \ \forall \ q\ge1.
\]
This implies $\ln (1/ W_{\infty})$
 is sub-Gaussian (see e.g. \cite[Proposition 2.5.2]{Ver18})
and~\eqref{eq-small-ball-W} follows.
\end{proof}
  

\appendix 
 
\section{Proof of Lemmas in Section~\ref{sec-LDP}}\label{Sec-LDP-proof}

\subsection{Proof of Lemma~\ref{lem-LDP-level-set-below-the-line}}\label{sec-LDP-proof-b}

We begin by recalling a useful inequality from \cite{AHS19}. 
Let  $(X_n)_{n\geq 0}$ be an inhomogeneous Galton-Watson process.  
At generation $n$  each individual independently  produces a number of offspring according to a common $\mathbb{N}$-valued random variable $\nu_n$. Its mean is denoted by:
\begin{equation}
  m_n\coloneqq\E\left[ \nu_n\right] \in(0,\infty). 
\end{equation} 
  
\begin{lemma}[{\cite[Proposition 2.1]{AHS19}}]\label{lem-IGW-tail}
Let $\alpha>1$ and $n\geq 1$. If there exists $\lambda>0$ such that for all $i \geq0$,
\begin{equation}\label{eq-IGW-tail-cond}
  \E\left[ e^{\lambda\nu_i}\right] \leq e^{\alpha\lambda m_i},
\end{equation} 
then for all $h >0$ and all integers $\ell\geq1$, 
\begin{equation}
  \P\bigg(X_n\geq\max\bigg\{\ell,(\alpha+h)^n\ell\max_{0\leq i<n}\prod_{j=i}^{n-1}m_j\bigg\}\Big|X_0=\ell\bigg)\leq n\exp\bigg(-\frac{h \ell}{\alpha+h}\lambda+\lambda\bigg).
\end{equation}
\end{lemma}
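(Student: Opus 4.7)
The plan is to combine the one-step Chernoff estimate supplied by the hypothesis with a union bound over the first-passage time of $(X_i)$ across a carefully designed deterministic threshold sequence. Markov's inequality together with the exponential moment bound gives, for every $k \ge 0$ and every $s > 0$,
\[
\P(X_{i+1} \ge s \mid X_i = k) \le e^{-\lambda s}\bigl(\E[e^{\lambda \nu_i}]\bigr)^k \le e^{-\lambda s + \alpha \lambda m_i k},
\]
which is the only consequence of the hypothesis that I will use; in particular, $\P(X_{i+1} \ge (\alpha+h) m_i k \mid X_i = k) \le e^{-h\lambda m_i k}$.

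Set $N_i \coloneqq \prod_{j<i} m_j$ and $K \coloneqq \max\{\ell,\,(\alpha+h)^n \ell \max_{0 \le i < n} N_n/N_i\}$. I would introduce the thresholds $T_0 \coloneqq \ell$ and, for $i \ge 1$,
\[
T_i \coloneqq (\alpha+h)^i \ell\, N_i \big/ \min_{0 \le j < i} N_j,
\]
which by construction satisfies $T_i \ge (\alpha+h) m_{i-1} T_{i-1}$ for every $i \ge 1$ and $K \ge T_n$. Define the first-passage time $\tau \coloneqq \min\{i \ge 1 : X_i \ge T_i\}$. Since $K \ge T_n$, the inclusions $\{X_n \ge K\} \subseteq \{X_n \ge T_n\} \subseteq \{\tau \le n\}$ hold, and a union bound combined with conditioning on $\{X_{i-1} < T_{i-1}\}$ then yields
\[
\P(X_n \ge K) \le \sum_{i=1}^{n} \P(\tau = i) \le \sum_{i=1}^{n} e^{-\lambda(T_i - \alpha m_{i-1} T_{i-1})}.
\]
The task therefore reduces to bounding each summand by $\exp(-h\ell\lambda/(\alpha+h) + \lambda)$, with the factor $n$ coming from the union bound.

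The main obstacle I anticipate is the threshold gap estimate, namely verifying that $T_i - \alpha m_{i-1} T_{i-1} \ge h\ell/(\alpha+h) - 1$ uniformly in $i$. A direct computation only gives $T_i - \alpha m_{i-1} T_{i-1} \ge h\, m_{i-1} T_{i-1}$, which can degenerate when $m_{i-1}$ is small. I expect to remedy this by (i) truncating the threshold sequence from below by $\ell$, mirroring the $\max\{\ell, \cdot\}$ in the definition of $K$, so that $T_{i-1} \ge \ell$ throughout, and (ii) exploiting the $\mathbb{N}$-valuedness of $\nu_i$ (hence of $X_{i-1}$) to strengthen $X_{i-1} < T_{i-1}$ to $X_{i-1} \le T_{i-1} - 1$, which gains an extra factor $e^{\alpha \lambda m_{i-1}}$ in the Chernoff bound. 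This extra slack is precisely what the additive $+\lambda$ term in the stated exponent seems to encode, and combining it with the threshold calculation should close the gap and deliver the claimed bound $n\exp(-h\ell\lambda/(\alpha+h) + \lambda)$.
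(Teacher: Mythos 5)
Your scaffolding --- the one-step Chernoff bound, a first-passage time across deterministic thresholds $T_i$, and a union bound over the passage index --- is sound, and your verifications that $T_i\ge(\alpha+h)m_{i-1}T_{i-1}$ and $T_n\le K$ are correct. (For reference, the paper itself gives no proof of this lemma; it quotes \cite[Proposition~2.1]{AHS19}, whose argument has the same architecture.) However, the proof is not complete: the per-step estimate is exactly the point you leave open (``should close the gap''), and the two remedies you propose do not close it as stated. Remedy (ii) is a misdiagnosis: strengthening $X_{i-1}<T_{i-1}$ to $X_{i-1}\le T_{i-1}-1$ gains a factor $e^{-\alpha\lambda m_{i-1}}$, which is essentially $1$ in the dangerous regime of small $m_{i-1}$ that you yourself identified, and when $m_{i-1}$ is large it cannot offset a deficit of order $\alpha\lambda m_{i-1}\ell$; the additive $+\lambda$ in the statement is harmless slack, not the encoding of this trick. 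Remedy (i) creates a new problem: once you replace $T_{i-1}$ by $\max\{T_{i-1},\ell\}$, the only inequality you have established, $T_i\ge(\alpha+h)m_{i-1}T_{i-1}$, no longer controls $T_i$ in terms of the truncated value, so the ``gap'' $T_i-\alpha m_{i-1}\max\{T_{i-1},\ell\}$ could a priori be very negative precisely when a stretch of small means drags $T_{i-1}$ below $\ell$ and is followed by a step with $m_{i-1}>1/\alpha$. Nothing in your write-up rules this configuration out.

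What is missing is one structural observation about your own thresholds: since $\min_{0\le j<i}N_j\le N_{i-1}$, you always have $T_i\ge(\alpha+h)^i\ell\,m_{i-1}\ge(\alpha+h)\ell\,m_{i-1}$. With the truncated thresholds $T_i'\coloneqq\max\{T_i,\ell\}$ (which still satisfy $T_n'\le K$), this yields for every $k<T_{i-1}'$ the bound $\P(X_i\ge T_i'\mid X_{i-1}=k)\le\exp\big(-\lambda\max\{(\alpha+h)m_{i-1}\max(T_{i-1},\ell),\ell\}+\alpha\lambda m_{i-1}\max(T_{i-1},\ell)\big)$, and a two-case analysis --- according to whether $(\alpha+h)m_{i-1}\max(T_{i-1},\ell)\ge\ell$ or not --- gives in both cases the exponent $-h\ell\lambda/(\alpha+h)$, uniformly in $i$ (so the integrality trick and even the $+\lambda$ are not needed). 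Equivalently, and more cleanly, one can decompose $\{X_n\ge K\}$ over the last index $i$ with $X_{i+1}\ge\max\{(\alpha+h)m_iX_i,\ell\}$: the suffix-product form of $K$ forces $X_{i+1}\ge\ell$ at that index, and the same two-case Chernoff computation applies. Until one of these steps is written out, the key estimate in your plan is asserted rather than proved.
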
 

We are now ready for the proof of Lemma~\ref{lem-LDP-level-set-below-the-line}.

\begin{proof}[Proof of Lemma~\ref{lem-LDP-level-set-below-the-line}]
  The case $M>n\ln2$ is trivial since $\L^{[x]}_n(L,L+1)\leq2^n$.   Let $L' = L-x$.  Observe that $ \L^{[x]}_{n} (L, L+1 ) =  \L_{n} (L',L'+1 )$ and $B^{[x]}_{n, L,M}  =  B_{n,L',M}$. 
  We can therefore assume, without loss of generality, that $x=0$, $M \le n \ln 2$. 

\smallskip
\noindent
\underline{\textit{Step 1.}}
 Fix $\delta \in (\frac{3}{4},1)$. We discretize time by partitioning $\{0,\cdots, n\}$ into   blocks of length $n^{\delta}$. Set 
 \begin{equation}
   N\coloneqq n^{1-\delta} ~\text{and}~ \mathsf{s}_{j} \coloneqq j n^{\delta} \ \text{for}\  j=0,1,2,\cdots,N.
 \end{equation}   
Next, we introduce a  good  event $E(n)$ where the BRW does not travel too far from the origin:
 \begin{equation}
  E(n) \coloneqq\{ \forall \, u \in \mathbb{T}_{\le n}: |S_{u}| \leq  2 n^{3/2}   \}.
 \end{equation} 
Using a union bound and a standard Gaussian tail estimate  $\P(\xi>x)\le(2\pi)^{-1/2}x^{-1}e^{-x^2/2}$, it follows 
 \begin{equation}\label{eq-E(n)-c}
  \P( E(n)^{c} ) \leq \sum_{k=1}^{n} 2^{k} \P( \sqrt{k} |\xi | \geq    n^{3/2} ) \leq \frac{1}{n}\sum_{k=1}^{n} 2^{k}  e^{- \frac{2 n^3}{ k}}  \leq 2^{n} e^{- 2 n^{2} }, 
 \end{equation}
where we have used $2^{k}e^{-\frac{2 n^3}{k}}\leq 2^{n}e^{-2n^2}$ for all $1\leq k\leq n$. Observe that on the event $E(n)$,  $\L_{n}(L,L+1) \geq 1$ can only occur if $|L|\leq 1+2n^{3/2}$.  

Next, choose $\delta'\in(0,2\delta-\frac{3}{2})$ and partition the spatial interval $[-2n^{3/2},2n^{3/2}]$ into small intervals of length $n^{\delta'}$. Set $\mathsf{f}_{j}\coloneqq j n^{\delta'}$ for $-2n^{3/2-\delta'}\leq j\leq 2n^{3/2-\delta'}$. This grid forms the basis for a set of discrete paths:
\begin{equation}
  \mathsf{Path}\coloneqq \left\{f: \{\mathsf{s}_{j}\}_{j=0}^{N}\to\{\mathsf{f}_{j}\}_{j=-2n^{3/2-\delta'}}^{2n^{3/2-\delta'}}~\text{and}~f(0)=0, f(n)=L\right\}.
\end{equation}  
Given  $f\in\mathsf{Path}$, we say that a particle $u$ at generation $k$ follows the path $f$ until time $s_k$, if for all $0\leq j\leq k$, the ancestor of $u$ at generation $s_j$ lies in the interval $[f(\mathsf{s}_{j})- n^{\delta^{\prime}}, f(\mathsf{s}_{j})+n^{\delta^{\prime}}]$. Define 
\begin{equation}
  \mathsf{Z}_{k}(f) \coloneqq \sum_{|u|=k}\ind{u~\text{follows}~f~\text{until}~\mathsf{s}_{k}} .
\end{equation}  
Consequently, on the event $E(n)$ we have $\L_{n}(L,L+1)\leq\sum_{f\in\mathsf{Path}}\mathsf{Z}_{N}(f)$.  
 
\smallskip
\noindent
\underline{\textit{Step 2.}} 
We improve this bound by restricting the sum to a smaller set of relevant paths. Observe that $\mathsf{Z}_{k}(f) \geq 1$ only when there exists a particle $u$ at some generation $\mathsf{s}_{k}$ whose displacement satisfies $S_{u}-f(\mathsf{s}_{k})\in[-n^{\delta'}, n^{\delta'}]$. Thus on the event $E(n)\cap B_{n,L,M-\epsilon n}$, any $f\in\mathsf{Path}$ with $Z_{N}(f)\geq 1$ satisfies
\begin{equation}
 \forall 0\leq k<N, \exists h\in [- n^{\delta'}, n^{\delta'}],\quad (n-\mathsf{s}_{k})\ln 2-\frac{[L-(f(\mathsf{s}_{k})+h)]^2 }{2(n-\mathsf{s}_{k})}\leq M-\epsilon n.
\end{equation}
 Since $|L- f(s_{k})|\leq 4 n^{3/2}$ and $n-s_{k}\geq n^{\delta}$, this implies that for all $0\leq k<N$,
\begin{equation}\label{cond-below-the-curve}
    (n-\mathsf{s}_{k}) \ln 2 -  \frac{[L-   f(\mathsf{s}_{k})   ]^2 }{2(n- \mathsf{s}_{k})}  
    \leq M  - \epsilon n + O(n^{3/2+\delta'-\delta}).
\end{equation}  
Hence, on the event $E(n)\cap B_{n,L,M-\epsilon n}$,  we have 
\begin{equation}\label{eq-decom-path}
  \L_{n}(L,L+1)\leq\sum_{f\in\mathsf{Path}^*}\mathsf{Z}_{N}(f),
\end{equation}
where $\mathsf{Path}^*\coloneqq\{f\in\mathsf{Path}: f~\text{satisfies}~\eqref{cond-below-the-curve}\}$. By union bound, we estimate the desired probability: 
\begin{equation}
  \P(\{\L_{n}(L,L+1)\geq e^{M}\}\cap B_{n,L,M-\epsilon n})\leq\P(E(n)^{c})+\sum_{f\in\mathsf{Path}^{*}}\P\left(\mathsf{Z}_{N}(f)\ge \frac{ e^{M} } {  |\mathsf{Path}^{*}|} \right).
\end{equation} 
Fix $\epsilon'<\epsilon/2$. Observe that $|\mathsf{Path}^{*}|\leq|\mathsf{Path}|\leq(2n^{3/2})^{N}=e^{o(n)}=o(e^{\epsilon'n})$. We will prove that for sufficiently large $n$ depending only on $\delta,\epsilon$ and $\epsilon'$,  
\begin{equation}\label{eq-Pr-f}
  \mathrm{Pr}_{\eqref{eq-Pr-f}}(f)\coloneqq\P(\mathsf{Z}_{N}(f)\geq e^{M-\epsilon'n})\leq\exp\left(-e^{n\epsilon'/2}\right) \ , \ \forall\, f\in\mathsf{Path}^* . 
\end{equation}  
Together with the previous inequalities, we get the desired result
\begin{align*}
  \P(\{\L_{n}(L,L+1)\geq e^{M}\}\cap B_{n,L,M-\epsilon n}) \leq 2^{n}e^{-2n^{2}}+(2n^{3/2})^{N}\exp\left(-e^{n\epsilon'/2}\right) =o(e^{-n^{2}}).
\end{align*}  
 
\smallskip
\noindent
\underline{\textit{Step 3.}} 
It remains to show \eqref{eq-Pr-f}. 
On the event $\mathsf{Z}_{N}(f)\geq e^{M-\epsilon'n}$, there exists a smallest index $r$ such that $\mathsf{Z}_{r}(f)\geq e^{\epsilon'n}$. From the fact that $\mathsf{Z}_{r-1}(f)<e^{\epsilon'n}$ and the binary branching mechanism, it follows that $\mathsf{Z}_{r}(f)<2^{n^{\delta}}e^{\epsilon'n}$. Particularly this implies $r<N$. 
Employing the union bound, we get 
\begin{equation}\label{eq-Pr-f-1}
  \mathrm{Pr}_{\eqref{eq-Pr-f}}(f)\leq\sum_{r=1}^{N-1} \ \sum_{\ell=e^{\epsilon'n}}^{2^{n^{\delta}}e^{\epsilon'n}}\P(\mathsf{Z}_{N}(f)\geq e^{M-\epsilon'n},\mathsf{Z}_{r}(f)=\ell).
\end{equation}

Conditionally on $\{\mathsf{Z}_{r}(f)=\ell\}$, for each $r\leq j\leq N-1$, we can express $\mathsf{Z}_{j+1}(f)$ as   $\sum_{i=1}^{\mathsf{Z}_j(f)} \nu_i^{(j)}$   where $\nu_i^{(j)}$ denotes the number of offspring of the $i$-th individual counted in $\mathsf{Z}j(f)$  located within  $[f(\mathsf{s}_{j+1}) - n^{\delta'}, f(\mathsf{s}_{j+1}) + n^{\delta'}]$.  
By the branching property, $(\nu_i^{(j)})_{i\geq 1}$ are independent. They would further be identically distributed if the particles at generation $\mathsf{s}_{j}$ were exactly positioned at $f(\mathsf{s}_{j})$. 
However, $\nu_i^{(j)}$ is stochastically dominated by  $\widetilde{\nu}^{(j)}$, 
defined as the number of particles at time $n^\delta$ in a BRW starting at the origin, which fall into  $[ \Delta f(\mathsf{s}_{j+1})-2n^{\delta^{\prime}},\Delta f(\mathsf{s}_{j+1})+2n^{\delta^{\prime}} ]$ at time $n^\delta$ where $\Delta f(\mathsf{s}_{j+1})\coloneqq f(\mathsf{s}_{j+1})-f(\mathsf{s}_{j})$. 
Consequently, conditioned on  $\{\mathsf{Z}_{r}(f)=\ell \} $, $\mathsf{Z}_{N}(f)$ is dominated by the population size $X_{N-r}$ of an inhomogeneous Galton-Watson process at generation $N-r$ with offspring distribution sequence $(\widetilde{\nu}^{(r+j)})_{j\ge 0}$ and initial population $X_0=\ell$. This domination directly implies: 
\begin{equation}\label{eq-Pr-f-2}
  \P(\mathsf{Z}_{N}(f)\geq e^{M-\epsilon'n} \mid \mathsf{Z}_{r}(f)=\ell)\leq\P\left(X_{N-r}\geq e^{M-\epsilon'n}\mid X_0=\ell\right).
\end{equation}

In order to apply Lemma~\ref{lem-IGW-tail} to the right-hand side in \eqref{eq-Pr-f-2}, we now select a convenient $\lambda$ satisfying condition~\eqref{eq-IGW-tail-cond}.  Set $\lambda= 2^{-n^{\delta}}$ so that $\lambda\widetilde{\nu}^{(j)}\le1$, and let $\alpha=e-1$.   Since $e^{x}\leq 1+\alpha x$ for all $x\in[0,1]$,  it follows that $\E(e^{\lambda\widetilde{\nu}^{(j)}})\leq 1+\alpha\lambda\E(\widetilde{\nu}^{(j)})\leq e^{\alpha\lambda\E(\widetilde{\nu}^{(j)})}$. Next, we need to bound the product term $\max_{r\leq k<N}\prod_{j=k}^{N-1}\E(\widetilde{\nu}^{(j)})$.  First, we estimate the mean of a single offspring distribution: 
\begin{align}
  \E(\widetilde{\nu}^{(j)})&=2^{n^\delta}\P\left(\left| n^{\delta/2} \xi -\Delta f(\mathsf{s}_{j+1})\right|\leq 2n^{\delta^{\prime}}\right)\\
  &\leq\exp\bigg(n^\delta\ln 2-\frac{\left(\Delta f(\mathsf{s}_{j+1})\right)^2}{2n^\delta}+2|\Delta f(\mathsf{s}_{j+1})|n^{\delta'-\delta}+O\left(\ln n\right) \bigg)
\end{align}
where $O\left(\ln n\right)$ is uniform in $r,j$, and $f$. Since $f\in\mathsf{Path}^*$, we know $|\Delta f(\mathsf{s}_{j+1})|\leq 4n^{\frac{3}{2}}$. Thus for any $r \le k <N$, the product is bounded by:
\begin{equation} 
  \prod_{j=k}^{N-1}\E(\widetilde{\nu}^{(j)})\leq\exp\bigg((n-\mathsf{s}_{k})\ln 2-\frac{1}{2 n^\delta}\sum_{j=k}^{N-1}\left(\Delta f(\mathsf{s}_{j+1})\right)^2+O(n^{\frac{5}{2}+\delta'-2\delta})\bigg).
\end{equation} 
By using the Cauchy-Schwarz inequality, the sum in the exponent has a lower bound:
\begin{equation}
  \frac{1}{2n^\delta}\sum_{j=k}^{N-1}\left(\Delta f(\mathsf{s}_{j+1})\right)^2\geq\frac{\left(f(\mathsf{s}_{N})-f(\mathsf{s}_{k})\right)^2}{2n^{\delta}(N-k)}=\frac{\left[ L-f(\mathsf{s}_{k})\right]^2}{2(n-\mathsf{s}_{k})}.  
\end{equation}  
Combined with \eqref{cond-below-the-curve}, this gives
\begin{equation}
  \prod_{j=k}^{N-1}\E(\widetilde{\nu}^{(j)})\leq\exp\left(M-\epsilon n+O(n^{\frac{5}{2}+\delta'-2\delta})+O(n^{\frac{3}{2}+\delta'-\delta})\right).
\end{equation} 
Our choice of  $\delta'$  ensures that the exponents in the error terms are strictly less than 1, specifically 
$0<\frac{3}{2}+\delta'-\delta<\frac{5}{2}+\delta'-2\delta<1$.  
Recalling that 
$M\geq\epsilon n>2\epsilon'n$ and $e^{\epsilon'n}\leq\ell\leq 2^{n^{\delta}}e^{\epsilon'n}$,  we deduce  
\begin{equation}
  \max\bigg\{\ell,(\alpha+1)^{N-r}\ell\max_{r\leq k<N}\prod_{j=k}^{N-1}\E(\widetilde{\nu}^{(j)})\bigg\}\leq\max\{e^{\epsilon'n+o(n)},e^{M-(\epsilon-\epsilon')n+o(n)}\}=o(e^{M-\epsilon'n}). 
\end{equation} 
Applying Lemma~\ref{lem-IGW-tail}  
with parameters $\alpha=e-1$, $h=1$ and $\lambda=2^{-n^{\delta}}$ we obtain: 
\begin{equation}\label{eq-Pr-f-3}
  \P\left(X_{N-r}\geq e^{M-\epsilon'n}\mid X_0=\ell\right)\leq n\exp\left(-  2^{-n^{\delta}} {\ell}/{e} + 2 ^{-n^\delta}\right)  \  \forall \, 1\leq r<N .
\end{equation}
Finally, combining inequalities \eqref{eq-Pr-f-1}, \eqref{eq-Pr-f-2}, and \eqref{eq-Pr-f-3},   
for large $n$ depending only on $\delta,\epsilon$ and $\epsilon'$,
\begin{equation}
  \mathrm{Pr}_{\eqref{eq-Pr-f}}(f)\le n\sum_{r=1}^{N} \ \sum_{\ell=e^{\epsilon'n}}^{2^{n^{\delta}}e^{\epsilon'n}}\exp\left(-2^{-n^{\delta}} {\ell}/{e}+  2 ^{-n^\delta}\right)\leq\exp\left(-e^{ n\epsilon'/2}\right) 
\end{equation}
as required in \eqref{eq-Pr-f}. 
\end{proof}

\subsection{Proof of Lemma~\ref{lem-LDP-level-set-above-the-line}}\label{sec-LDP-proof-a}

\begin{proof}[Proof of Lemma~\ref{lem-LDP-level-set-above-the-line}]
For $\epsilon>0$ and $0\leq M\leq n(\ln 2-\epsilon)$,  let us define the set
\begin{equation}
  \mathcal{P}_{n,\epsilon,M}\coloneqq\{(t,\Lambda): 1\leq t\leq n, \varPhi(t,\Lambda)\geq M+\epsilon n\}. 
\end{equation} 
Notice that for any $(t,\Lambda)\in\mathcal{P}_{n,\epsilon, M}$, we have $|\Lambda|\leq t\sqrt{2(\ln 2-\epsilon)}$ and $t\geq \frac{\epsilon}{\ln 2}n\geq\epsilon n$. 

First, we show that it is sufficient to establish the following uniform bound:
\begin{equation}\label{eq-restart-t-L}
  \mathrm{Pr}_{\eqref{eq-restart-t-L}}(n,\epsilon)\coloneqq\sup_{0\leq M\leq n(\ln 2-\epsilon)}\sup_{(t,\Lambda)\in\mathcal{P}_{n,\epsilon,M}}\P\left(\L_{t}(\Lambda,\infty)\le e^M\right)\le e^{-cn ^2}.
\end{equation}
 To see why this is sufficient, let $\tau$ denote   the minimal integer $k$ such that there exists a particle $u$ at generation $k$ for which the event $A^{[x]}_{n,L,M+\epsilon n}$ occurs.
  Let $u^*$ be the lexicographically smallest such particle. In particular $(|u^*|, [x+S_{u^*}] ) \in  \mathcal{P}_{n,\epsilon,M}$.   On the event $\{\L^{[x]}_{n}(L,\infty)\leq e^{M}\}$, we have $\L^{(u^*)}_{n-\tau}(L-[x+S_{u^*}],\infty)\le e^M$.
It follows from the strong Markov property that 
\begin{equation}
  \P\left(\L^{(u^*)}_{n-\tau}(L-[x+S_{u^*}],\infty)\le e^M~|~\mathcal{F}_{\tau}\right)\ind{A^{[x]}_{n,L,M}}\le\mathrm{Pr}_{\eqref{eq-restart-t-L}}(n,\epsilon).
\end{equation}
Taking the expectation yields the claim.

We now turn to proving \eqref{eq-restart-t-L}. 
Take $t_n=\frac{3}{\ln 2}\ln n$ such that $2^{t_n}=n^3$, and choose $\delta=\frac{\epsilon^2}{4\beta_{c}}\wedge\beta_{c}$. For $n$ sufficiently large (depending only on $\epsilon$), we have the estimate:
\begin{align}\label{eq-modulus-conti-varPhi}
  |\Phi(t-t_n,\Lambda+\delta n)-\Phi(t,\Lambda)|&\leq t_n\ln 2+\frac{|\Lambda|\delta n+\delta^2 n^2}{2t}+\frac{|\Lambda+\delta n|^2 t_n}{2t(t-t_n)}\\
  &\leq\Theta_{\epsilon}(\ln n)+\frac{\beta_c+\delta}{2\epsilon}\delta n \leq\Theta_{\epsilon}(\ln n)+\frac{\epsilon}{4}n.
\end{align}
This implies that  $(t-t_n,\Lambda+\delta n)\in\mathcal{P}_{n,\epsilon/2,M}$. Applying the branching property at generation $t_n$,
\begin{align}
  \P(\L_{t}(\Lambda,\infty)\le e^M~|~\mathcal{F}_{t_n})   
  &\leq\prod_{|u|=t_{n},S_u\geq -\delta n}\P\left(\L_{t-t_{n}}^{(u)}(\Lambda-S_u,\infty)\leq e^M~|~\mathcal{F}_{t_{n}}\right)\\
  &\leq\P\left(\L_{t-t_{n}}(\Lambda+\delta n,\infty)\leq e^{M}\right)^{\L_{t_{n}}(-\delta n,\infty)}.
\end{align}
We claim that for large $n$ depending only on $\epsilon$, 
\begin{equation}\label{eq-typical-size}
  \sup_{0\leq M\leq n(\ln 2-\epsilon)}\sup_{(t',\Lambda')\in\mathcal{P}_{n,\epsilon/2, M}}\P\left(\L_{t'}(\Lambda',\infty)\leq e^M\right)\leq 1/2. 
\end{equation} 
Assuming this claim holds, taking the expectation gives 
\begin{equation}
  \P(\L_{t}(\Lambda,\infty)\le e^M)\leq(1/2)^{n^2}+\P\left(\L_{t_{n}}(-\delta n,\infty)\leq n^2\right).
\end{equation} 
It follows from~\cite[(3.36)]{CH20} that there exists a constant $c>0$ such that
\begin{equation}
  \P\left(\L_{t_{n}}(-\delta n,\infty)\leq n^2\right)=\P\left(\L_{t_{n}}(-\delta n,\infty)\leq 2^{\frac{2}{3}t_{n}}\right)\le e^{-cn^2} .   \footnote{The trivial union bound gives   $   \P \left( \L_{t_{n}}(-\infty, -\delta n ) \geq 1 \right) \leq 2^{t_{n}} e^{- \Theta (n^2/ \ln n )} = e^{- \Theta (n^2/ \ln n )}  $ and indeed this is sufficient for our use to deduce the main theorems in this paper.} 
\end{equation}

It remains to show the claim \eqref{eq-typical-size}.  Let $\mathcal{Q}_{\epsilon}\coloneqq\epsilon^3\mathbb{Z}\cap(-\sqrt{2\ln 2},\sqrt{2\ln 2})$. 
For each $q\in\mathcal{Q}_{\epsilon}$, it is well-known that the following Law of Large Numbers holds (see e.g.~\cite{Biggins79}):
\begin{equation}
 \frac{1}{n}\ln\L_{n}(q n,\infty)\xrightarrow[n\to\infty]{\mathrm{a.s.}}\ln 2-\frac{q^2}{2}\ind{q\geq 0}\geq\Phi(q), 
\end{equation} 
Hence there exists a large constant $K(q,\epsilon)$ such that for all $n \geq K(q,\epsilon)$,
\begin{equation}\label{eq-SLLN-2}
  \P(\L_{n}(q n,\infty)\leq e^{n\Phi(q)-n\epsilon/4})\leq 1/2.
\end{equation}
Now, consider any  $M\in[0,n(\ln 2-\epsilon)]$ and $(t',\Lambda')\in\mathcal{P}_{n,\epsilon/2,M}$.  Recall that this implies   $|\Lambda'/t'|\leq \sqrt{2(\ln2-\epsilon)}$ and $t' \ge \epsilon n /2$. Define $q'=\max\{q\in\mathcal{Q}_{\epsilon}: q\leq\Lambda'/t'\}$, in particular $0\leq\Lambda'-qt'\leq\epsilon^3 n$.  We will show that $M\leq t'\Phi(q')-t'\epsilon/4$.  If this holds, then
\begin{equation}
  \{\L_{t'}(\Lambda',\infty)\leq e^{M}\}\subset\left\{\L_{t'}(q't',\infty)\leq e^{t'\Phi(q')-t'\epsilon/4}\right\}.
\end{equation} 
Then, for $n$ large enough so that $t'\geq n\epsilon/2 \geq\max_{q\in\mathcal{Q}_{\epsilon}}K(q,\epsilon)$, it follows from \eqref{eq-SLLN-2} that  the probability of the right-hand side is at most $1/2$. This would prove \eqref{eq-typical-size}. 

To verify the inequality for $M$, recall that  $M \leq \varPhi(t',\Lambda')-n\epsilon/2 $. Moreover we have
\begin{equation}
 | \varPhi(t',\Lambda')- \Phi(q')t'|=   \left|\frac{ \Lambda'^2}{2 t'} - \frac{(q' t')^2 }{2 t'} \right| \leq  \frac{|\Lambda|+|q'|t'}{2 t'} \epsilon^3 n \leq   \beta_{c} \epsilon^3 n \leq \Theta( \epsilon^2 t') .
\end{equation}
For a sufficiently small choice of $\epsilon$, we have $ \beta_{c} \epsilon < \frac{1}{4}$ which gives $M \leq t'\Phi(q') + \beta_{c} \epsilon^2 t' - n\epsilon/2 \leq t'\Phi(q') - t' \epsilon/4$ as required. This completes the proof.
\end{proof}

\section*{Acknowledgments}
X.C. and H.M. express their gratitude to Prof. Jian Song for his generous hospitality and support at the Research Center for Mathematics and Interdisciplinary Sciences, Shandong University, during their research visit.
X.C. is supported by National Key R\&D Program of China (No. 2022YFA1006500).
Y.H. is partially supported by National Key R\&D Program of China (No. 2023YFA1010103) and by NSFC-12301164.

 
 \bibliographystyle{alpha}
 \bibliography{biblio}

@misc{BP25,
      title={Gaussian free field and {Liouville} quantum gravity}, 
      author={Nathanaël Berestycki and Ellen Powell},
      year={2025},
      eprint={2404.16642},
      archivePrefix={arXiv},
      primaryClass={math.PR},
      url={https://arxiv.org/abs/2404.16642}, 
      Note={arXiv:2404.16642}
}

@incollection{Man72,
 author = {Mandelbrot, Beno{\^i}t B.},
 title = {Possible refinement of the lognormal hypothesis concerning the distribution of energy dissipation in intermittent turbulence},
 booktitle = {Statistical Models and Turbulence},
 pages = {333--351},
 publisher = {Springer},
 year = {1972}
}

@article{BS09,
 author = {Benjamini, Itai and Schramm, Oded},
 title = {{KPZ} in one dimensional random geometry of multiplicative cascades},
 fjournal = {Communications in Mathematical Physics},
 journal = {Commun. Math. Phys.},
 issn = {0010-3616},
 volume = {289},
 number = {2},
 pages = {653--662},
 year = {2009},
 language = {English},
 doi = {10.1007/s00220-009-0752-1},
 keywords = {83C45,81V17},
 zbMATH = {5575974},
 Zbl = {1170.83006}
}

@article{FKZ11,
title = {Random geometry, quantum gravity and the {Kähler} potential},
journal = {Physics Letters B},
volume = {705},
number = {4},
pages = {375-378},
year = {2011},
issn = {0370-2693},
doi = {https://doi.org/10.1016/j.physletb.2011.09.098},
url = {https://www.sciencedirect.com/science/article/pii/S0370269311011981},
author = {Frank Ferrari and Semyon Klevtsov and Steve Zelditch},
}

@article{FKZ12,
title = {Gravitational actions in two dimensions and the {Mabuchi} functional},
journal = {Nuclear Physics B},
volume = {859},
number = {3},
pages = {341-369},
year = {2012},
issn = {0550-3213},
doi = {https://doi.org/10.1016/j.nuclphysb.2012.02.003},
url = {https://www.sciencedirect.com/science/article/pii/S0550321312000661},
author = {Frank Ferrari and Semyon Klevtsov and Steve Zelditch},
}

@article{BFK14,
 author = {Bilal, Adel and Ferrari, Frank and Klevtsov, Semyon},
 title = {2D quantum gravity at one loop with {Liouville} and {Mabuchi} actions},
 fjournal = {Nuclear Physics. B},
 journal = {Nucl. Phys., B},
 issn = {0550-3213},
 volume = {880},
 pages = {203--224},
 year = {2014},
 language = {English},
 doi = {10.1016/j.nuclphysb.2014.01.005},
 keywords = {83C45,83E30,81T30,81S40,81T50},
 zbMATH = {6275229},
 Zbl = {1284.83049}
}

@article{Man74,
 author = {Mandelbrot, Benoit},
 title = {Multiplications al{\'e}atoires iter{\'e}es et distributions invariantes par moyenne ponderee al{\'e}atoire: quelques extensions},
 fjournal = {Comptes Rendus Hebdomadaires des S{\'e}ances de l'Acad{\'e}mie des Sciences, S{\'e}rie A},
 journal = {C. R. Acad. Sci., Paris, S{\'e}r. A},
 issn = {0366-6034},
 volume = {278},
 pages = {355--358},
 year = {1974},
 language = {French},
 keywords = {60K99,76F05,60G99,60F99},
 zbMATH = {3434889},
 Zbl = {0276.60097}
}

@article{RV14,
 author = {Rhodes, R{\'e}mi and Vargas, Vincent},
 title = {Gaussian multiplicative chaos and applications: a review},
 fjournal = {Probability Surveys},
 journal = {Probab. Surv.},
 issn = {1549-5787},
 volume = {11},
 pages = {315--392},
 year = {2014},
 language = {English},
 doi = {10.1214/13-PS218},
 keywords = {60G57,60G15,60G60,28A80,60-02},
 zbMATH = {6370363},
 Zbl = {1316.60073}
}

@book{Shi15,
 author = {Shi, Zhan},
 title = {Branching random walks. {\'E}cole d'{\'E}t{\'e} de {Probabilit{\'e}s} de {Saint}-{Flour} {XLII} -- 2012},
 fseries = {Lecture Notes in Mathematics},
 series = {Lect. Notes Math.},
 issn = {0075-8434},
 volume = {2151},
 isbn = {978-3-319-25371-8; 978-3-319-25372-5},
 year = {2015},
 publisher = {Cham: Springer},
 language = {English},
 doi = {10.1007/978-3-319-25372-5},
 keywords = {60-02,60J80,60G50,60G42,60G70},
 zbMATH = {6492274},
 Zbl = {1348.60004}
}

@article{Biggins77,
 author = {Biggins, J. D.},
 title = {Martingale convergence in the branching random walk},
 fjournal = {Journal of Applied Probability},
 journal = {J. Appl. Probab.},
 issn = {0021-9002},
 volume = {14},
 pages = {25--37},
 year = {1977},
 language = {English},
 doi = {10.2307/3213258},
 keywords = {60J80,60G50},
 zbMATH = {3555190},
 Zbl = {0356.60053}
}

@article{Kahane85,
 author = {Kahane, Jean-Pierre},
 title = {Sur le chaos multiplicatif.},
 fjournal = {Annales des Sciences Math{\'e}matiques du Qu{\'e}bec},
 journal = {Ann. Sci. Math. Qu{\'e}.},
 issn = {0707-9109},
 volume = {9},
 pages = {105--150},
 year = {1985},
 language = {French},
 keywords = {60G15,31B15,60J45},
 zbMATH = {3960673},
 Zbl = {0596.60041}
}

@Article{AHS19,
 Author = {A{\"{\i}}d{\'e}kon, Elie and Hu, Yueyun   and  Shi, Zhan},
 Title = {Large deviations for level sets of a branching {Brownian} motion and {Gaussian} free fields},
 FJournal = {Journal of Mathematical Sciences (New York)},
 Journal = {J. Math. Sci., New York},
 ISSN = {1072-3374},
 Volume = {238},
 Number = {4},
 Pages = {348--365},
 Year = {2019},
 Language = {English},
 DOI = {10.1007/s10958-019-04243-8},
 Keywords = {60J80,60J65,60F10,60G15},
 zbMATH = {7084417},
 Zbl = {1478.60230}
}

@book{Ver18,
 author = {Vershynin, Roman},
 title = {High-dimensional probability. {An} introduction with applications in data science},
 fseries = {Cambridge Series in Statistical and Probabilistic Mathematics},
 series = {Camb. Ser. Stat. Probab. Math.},
 volume = {47},
 isbn = {978-1-108-41519-4; 978-1-108-23159-6},
 year = {2018},
 publisher = {Cambridge: Cambridge University Press},
 language = {English},
 doi = {10.1017/9781108231596},
 keywords = {60-01,62-01},
 zbMATH = {6872475},
 Zbl = {1430.60005}
}

@article{CH20,
author = {Xinxin Chen and Hui He},
title = {{Lower deviation and moderate deviation probabilities for maximum of a branching random walk}},
volume = {56},
journal = {Annales de l'Institut Henri Poincaré, Probabilités et Statistiques},
number = {4},
publisher = {Institut Henri Poincaré},
pages = {2507 -- 2539},
keywords = {Böttcher case, Branching random walk, derivative martingale, Lower deviation, Maximal position, Moderate deviation, Schröder case, Small ball probability},
year = {2020},
doi = {10.1214/20-AIHP1048},
URL = {https://doi.org/10.1214/20-AIHP1048}
}

@misc{BV23,
      title={The left tail of the subcritical derivative martingale in a branching random walk}, 
      author={Benjamin Bonnefont and Vincent Vargas},
      year={2023},
      eprint={2312.13811},
      archivePrefix={arXiv},
      primaryClass={math.PR},
      url={https://arxiv.org/abs/2312.13811}, 
      Note={arXiv:2312.13811}
}

@article{LRV22,
author = {Hubert Lacoin and R{\'e}mi Rhodes and Vincent Vargas},
title = {{Path integral for quantum {Mabuchi} K-energy}},
volume = {171},
journal = {Duke Mathematical Journal},
number = {3},
publisher = {Duke University Press},
pages = {483 -- 545},
keywords = {2d quantum gravity, Gaussian multiplicative chaos, {Mabuchi} K-energy, Quantum field theory, random {Kähler} geometry},
year = {2022},
doi = {10.1215/00127094-2021-0007},
URL = {https://doi.org/10.1215/00127094-2021-0007}
}

@misc{CdRM24,
      title={Branching random walk conditioned on large martingale limit}, 
      author={Xinxin Chen and Loïc de Raphélis and Heng Ma},
      year={2024},
      eprint={2408.05538},
      archivePrefix={arXiv},
      primaryClass={math.PR},
      url={https://arxiv.org/abs/2408.05538},
      Note = {arXiv:2408.05538} 
}

@Article{Biggins79,
 Author = {Biggins, J. D.},
 Title = {Growth rates in the branching random walk},
 FJournal = {Zeitschrift f{\"u}r Wahrscheinlichkeitstheorie und Verwandte Gebiete},
 Journal = {Z. Wahrscheinlichkeitstheor. Verw. Geb.},
 ISSN = {0044-3719},
 Volume = {48},
 Pages = {17--34},
 Year = {1979},
 Language = {English},
 DOI = {10.1007/BF00534879},
 Keywords = {60J80,60G50,60G55},
 zbMATH = {3602456},
 Zbl = {0387.60092}
}

@Article{Zhang23,
 Author = {Zhang, Shuxiong},
 Title = {Lower deviation probabilities for level sets of the branching random walk},
 FJournal = {Journal of Theoretical Probability},
 Journal = {J. Theor. Probab.},
 ISSN = {0894-9840},
 Volume = {36},
 Number = {2},
 Pages = {811--844},
 Year = {2023},
 Language = {English},
 DOI = {10.1007/s10959-022-01183-z},
 Keywords = {60F10,60J80,60G50},
 zbMATH = {7692063},
 Zbl = {1515.60074}
}

 \end{document}